\newtheorem{thm}{Theorem}[section]
\newtheorem{lem}[thm]{Lemma}
\newtheorem{rem}[thm]{Remark}
\newtheorem{cor}[thm]{Corollary}
\newtheorem{defn}[thm]{Definition}
\def \p{\partial}
\def \O{\Omega}
\def \R{{\mathbb R}}
\def \e{\varepsilon}
\def \l{\lambda}
\numberwithin{equation}{section}
\title[System of inhomogeneous wave inequalities]{Critical criteria of Fujita type for a system of inhomogeneous wave inequalities in exterior domains}
\author{Mohamed Jleli}
\address{Department of Mathematics, College of Science, King Saud University, Riyadh 11451, Saudi Arabia}
\email{jleli@ksu.edu.sa}
\author{Bessem Samet}
\address{Department of Mathematics, College of Science, King Saud University, Riyadh 11451, Saudi Arabia}
\email{bsamet@ksu.edu.sa}
\author{Dong Ye}
\address{Center for Partial Differential Equations, School of Mathematical Sciences and Shanghai Key Laboratory of PMMP, East China Normal
University, Shanghai 200241, China}
\address{IECL, UMR 7502, D\'epartement de Math\'ematiques, Universit\'e de Lorraine, 57073 Metz, France}
 \email{dye@math.ecnu.edu.cn, dong.ye@univ-lorraine.fr}
\subjclass[2010]{35L71; 35A01; 35B44; 35B33}
\keywords{Inhomogeneous wave inequalities; exterior domain; blow-up; critical criteria}
\begin{document}
\maketitle

\begin{abstract}
We consider blow-up results for a system of inhomogeneous wave inequalities in exterior domains. We will handle three type boundary conditions: Dirichlet type, Neumann type and mixed boundary conditions. We use a unified approach to show the optimal criteria of Fujita type for each case. Our study yields naturally optimal nonexistence results for the corresponding stationary wave system and equation. We provide many new results and close some open questions.
\end{abstract}

\section{Introduction}
This paper is concerned with the study of existence and nonexistence of global weak solutions to the system of wave inequalities
\begin{eqnarray}\label{P}
\Box u \geq |x|^a |v|^p, \;\; \Box v \geq |x|^b |u|^q \quad \mbox{in }\; (0,\infty)\times \Omega^c.
\end{eqnarray}
Here $\Box:=\partial_{tt}-\Delta$ is the wave operator, $\Omega^c$ denotes the complement of $\O$, with $\O$ a bounded smooth open set in $\mathbb{R}^N$ containing the origin and $N\geq 2$. Let $p,q>1$ and $a,b\geq -2$.

\medskip
We will study \eqref{P} under three types of boundary conditions: the Dirichlet type condition:
\begin{equation}\label{BC1}
(u(t,x),v(t,x))\succeq (f(x),g(x)), \quad \mbox{on }\; (0,\infty) \times \partial \Omega;
\end{equation}
the Neumann type condition:
\begin{equation}\label{BC2}
\left(\frac{\partial u}{\partial \nu}(t,x),\frac{\partial v}{\partial \nu}(t,x)\right) \succeq (f(x),g(x)), \quad \mbox{on }\; (0,\infty)\times \partial \Omega;
\end{equation}
and the mixed boundary condition:
\begin{equation}\label{BC3}
\left(u(t,x),\frac{\partial v}{\partial \nu}(t,x)\right)\succeq (f(x),g(x)),\quad \mbox{on }\; (0,\infty)\times \partial \Omega,
\end{equation}
where $f,g\in L^1(\partial \Omega, \R_+)$ are two fixed functions and $\nu$ is the outward unit normal vector on $\partial \Omega$, relative to $\Omega^c$. By the notation $\succeq$, we mean
the partial order on $\mathbb{R}^2$, that is
$$
(y_1,y_2)\succeq (z_1,z_2) \Longleftrightarrow  y_i\geq z_i,\,\,i=1,2.
$$
We write $y\succ z$, for $y,z\in \mathbb{R}^2$ if $y\succeq z$ and $y\ne z$.

\medskip
The large-time behavior of solutions to the wave equation
\begin{eqnarray}\label{PRN}
\Box u = |u|^p \quad\mbox{in}& [0,\infty)\times\mathbb{R}^N
\end{eqnarray}
has been studied extensively since four decades. Inspired by the seminal work of John \cite{J} in $\R^3$, Strauss conjectured in \cite{St} that for each $N \geq 2$, there exists a critical exponent $p_c(N)$ of Fujita type for the global existence question to \eqref{PRN} with compactly supported data, and it should be the positive root of the polynomial
\begin{align}
\label{PCN}
(N-1)p^2-(N+1)p-2=0.
\end{align}
This conjecture is finally showed to be true for all dimensions $N\geq 2$ after twenty-five years of efforts, see for instance \cite{J, G1, G2, SI, SC, GE, YZ, Z} and the references therein. More precisely, let $N \geq 2$ and
\begin{align*}
p_c(N) = \frac{N+1 + \sqrt{N^2 + 10N - 7}}{2(N-1)},
\end{align*}
then
\begin{itemize}
\item for any $(u, \p_tu)|_{t = 0}$ compactly supported with positive average, the solution to \eqref{PRN} blows-up in a finite time if $1 < p \leq p_c(N)$;
\item if $p > p_c(N)$, there are compactly supported initial conditions $(u, \p_t
u)|_{t = 0} \succ (0, 0)$ such that the solution to \eqref{PRN} exists globally in time.
\end{itemize}

\medskip
The wave inequality in the whole space was firstly studied by Kato \cite{K}:
\begin{align}
\label{PERN}
\Box u \geq |u|^p \;\; \mbox{ in } [0, \infty) \times \R^N.
\end{align}
He found another critical exponent $\widetilde p_c(N) = \frac{N+1}{N-1}$. Pohozaev \& Veron \cite{PV} generalized Kato's work and pointed out the sharpness of $\widetilde p_c$ for \eqref{PERN}. More precisely, they proved that,
\begin{itemize}
\item for any $N \geq 2$ and $1 < p \leq \widetilde p_c(N)$, there is no global weak solution to \eqref{PERN}, if
\begin{align}
\label{PV1}
\int_{\R^N} \p_t u(0, x) dx > 0;
\end{align}
\item inversely, if $p > \widetilde p_c(N)$, there are positive global solutions satisfying \eqref{PERN} and \eqref{PV1}.
\end{itemize}

\medskip
A natural question is to understand the wave equation or inequality on other unbounded domains of $\R^N$. The study of blow-up for wave equation on exterior domains was initialized by Zhang in \cite{Zhang}. Among many other things, he considered the inhomogeneous equation
\begin{eqnarray}\label{PRNE}
\Box u = |x|^\alpha |u|^p \quad\mbox{in}& (0,\infty)\times \Omega^c,
\end{eqnarray}
where $N \geq 3$, $\alpha > -2$ and $\Omega\subset \R^N$ is a smooth bounded set. Under the Neumann boundary condition $\frac{\p u}{\p\nu} = f \geq 0$ on $(0, \infty)\times \p\O$, Zhang showed that the critical exponent becomes now $\frac{N+\alpha}{N-2}$:
\begin{itemize}
\item when $1 < p < \frac{N+\alpha}{N-2}$, \eqref{PRNE} has no global solution if $f\not\equiv 0$;
\item when $p > \frac{N+\alpha}{N-2}$, problem \eqref{PRNE} has global solutions for some $f > 0$.
\end{itemize}
However, the Dirichlet boundary condition case was left open, see Remark 1.5 of \cite{Zhang}. Recently the special case with $\alpha = 0$ and $\O = B_r$ was studied in \cite{JS}. Here and after, $B_r$ denotes the ball centered at $0$ with radius $r > 0$. Our study for \eqref{P} will yield an optimal answer for \eqref{PRNE} under the Dirichlet boundary condition, see Corollary \ref{Cnew1} below.

\medskip
Here we are interested to understand the blow-up of solutions to \eqref{P} under various boundary conditions \eqref{BC1}, \eqref{BC2} and \eqref{BC3}. We will determine the critical criteria of Fujita type for $(p, q)$ in each case, without any assumption on the initial data. As far as we know, we are not aware of such results concerning system of wave equations or inequalities. The study for \eqref{P} yields natural consequences for the corresponding stationary system, which seem also to be new for the Neumann type condition and the mixed boundary condition, see Corollary \ref{Cnew} below. We are confident that our ideas can be adapted for other situations, as damped wave operators, parabolic operators or higher order operators.

\medskip
Before stating our results, let us mention in which sense the solutions are considered. Denote
$$
Q = (0, \infty) \times \Omega^c\quad\mbox{and}\quad \Gamma=(0,\infty)\times \partial \Omega.
$$
We introduce the test function space
$$
\mathcal{D}= \left\{\varphi \in C^2_{\rm cpt}(Q, \mathbb{R}_+):\,  \varphi|_{\Gamma} = 0, \frac{\partial \varphi}{\partial \nu}|_{\Gamma} \leq 0\right\}.
$$
Here, $C^2_{\rm cpt}(Q, \R_+)$ means the space of nonnegative $C^2$ functions compactly supported in $Q$. Notice that $\O^c$ is closed and $\Gamma \subset Q$.

\begin{defn}\label{Def1}
A pair $(u,v)\in L^q_{loc}(Q)\times L^p_{loc}(Q)$ is a global weak solution to \eqref{P}-\eqref{BC1}, if for any $\varphi \in \mathcal D$,
\begin{equation}\label{wks11}
\int_{Q} |x|^a |v|^p \varphi dx dt -\int_{\Gamma}\frac{\partial \varphi}{\partial \nu} f d\sigma dt\leq \int_{Q}u \Box \varphi dx dt
\end{equation}
and
\begin{equation}\label{wks12}
\int_{Q} |x|^b |u|^q \varphi dx dt -\int_{\Gamma}\frac{\partial \varphi}{\partial \nu} g d\sigma dt\leq \int_{Q}v \Box\varphi dxdt.
\end{equation}
\end{defn}

For Neumann boundary problem, we consider the test function space
$$
\mathcal N = \left\{\varphi \in C^2_{\rm cpt}(Q, \R_+):\,   \frac{\p \varphi}{\p \nu}|_{\Gamma} = 0\right\}.
$$

\begin{defn}\label{Def2}
A pair $(u,v)\in L^q_{loc}(Q)\times L^p_{loc}(Q)$
is called a global weak solution to \eqref{P}--\eqref{BC2}, if for any $\psi \in \mathcal N$,
\begin{equation}\label{wks21}
\int_{Q} |x|^a |v|^p \psi dx dt + \int_{\Gamma} \psi f d\sigma dt\leq \int_{Q}u \Box  \psi dx dt
\end{equation}
and
\begin{equation}\label{wks22}
\int_{Q} |x|^b |u|^q  \psi dx dt + \int_{\Gamma} \psi g d\sigma dt\leq \int_{Q}v \Box \psi dxdt.
\end{equation}
\end{defn}

For the mixed boundary problem, the natural test function space is then $\mathcal D \times \mathcal N$.
\begin{defn}\label{Def3}
A pair $(u,v)\in L^q_{loc}(Q)\times L^p_{loc}(Q)$
is a global weak solution to \eqref{P}--\eqref{BC3}, if for any $(\varphi, \psi) \in \mathcal D \times \mathcal N$, there holds \eqref{wks11} and \eqref{wks22}.
\end{defn}

Define
$$
I_f = \int_{\p\O} fd\sigma, \quad \mbox{for any }\; f \in L^1(\p\O).
$$
Let ${\rm sgn}$ denote the standard sign function over $\R$. Our main result is the following.
\begin{thm}\label{T1}
Assume that $(a,b)\succ (-2, -2)$, $f, g \in L^1(\p\O)$, $\left(I_f, I_g\right) \succ (0, 0)$ and $p, q > 1$. Let either $N= 2$; or $N \geq 3$ and
\begin{align}
\label{newc1}
\max\left\{{\rm sgn}(I_f)\times\frac{2p(q+1)+pb+a}{pq-1},\;\; {\rm sgn}(I_g)\times \frac{2q(p+1)+qa+b}{pq-1}\right\}> N.
\end{align}
Then
\begin{itemize}
\item[(i)] there exists no global weak solution to \eqref{P}--\eqref{BC1} if $f, g \geq 0$;
\item[(ii)] there exists no global weak solution to \eqref{P}--\eqref{BC2};
\item[(iii)] there exists no global weak solution to \eqref{P}--\eqref{BC3} if $p>2$ and $f \geq 0$.
\end{itemize}
Furthermore, if $\O = B_r$, the sign condition for $f, g$ can be erased in ${\rm (i)}$ and ${\rm (iii)}$.
\end{thm}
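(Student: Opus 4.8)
The plan is to use the classical rescaled test-function method (à la Mitidieri--Pohozaev, adapted to the wave operator and exterior domains as in \cite{Zhang, PV}), applied simultaneously to the two inequalities of the system. First I would fix a family of cut-off functions: let $\theta \in C^\infty(\R_+)$ be nonincreasing with $\theta \equiv 1$ on $[0,1]$ and $\theta \equiv 0$ on $[2,\infty)$, and for $R$ large set $\varphi_R(t,x) = \theta(t/R)^\ell\, \theta(|x|/R)^\ell\, \xi(x)$, where $\xi$ is a fixed nonnegative cut-off which vanishes near $\p\O$ together with the correct Neumann sign (so that $\varphi_R \in \mathcal D$), and $\ell$ is a large exponent chosen so that all boundary terms from $\Box\varphi_R$ are controlled. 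For the Neumann and mixed cases one instead wants $\varphi_R$ (resp. $\psi_R$) to have vanishing normal derivative on $\Gamma$ while keeping a nonzero trace, so that the boundary integral $\int_\Gamma \psi_R\, f\, d\sigma\,dt$ survives and is bounded below by $c\, I_f\, R$ for $R$ large; this positive boundary contribution is what forces the contradiction when $I_f > 0$ (and analogously with $I_g$). The key input is that $\left(I_f,I_g\right)\succ(0,0)$, so at least one of the two boundary integrals is strictly positive, and condition \eqref{newc1} is engineered so that whichever index ($I_f$ or $I_g$) is positive, the corresponding scaling exponent exceeds $N$.

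Next I would insert $\varphi_R$ (and $\psi_R$) into \eqref{wks11}--\eqref{wks22}. Bounding $\int_Q u\,\Box\varphi_R\,dx\,dt$ by Hölder with exponents $q$ and $q'$ against the weight $|x|^b|u|^q\varphi_R$, and symmetrically $\int_Q v\,\Box\psi_R$ against $|x|^a|v|^p\psi_R$, one obtains
\begin{align*}
\int_Q |x|^a|v|^p\varphi_R\,dx\,dt + c\,I_f\,R &\le C\, \mathcal A_R^{1/q}\left(\int_Q |x|^b|u|^q\varphi_R\,dx\,dt\right)^{1/q},\\
\int_Q |x|^b|u|^q\psi_R\,dx\,dt + c\,I_g\,R &\le C\, \mathcal B_R^{1/p}\left(\int_Q |x|^a|v|^p\psi_R\,dx\,dt\right)^{1/p},
\end{align*}
where $\mathcal A_R = \int |\Box\varphi_R|^{q'} |x|^{-b q'/q}\varphi_R^{1-q'}\,dx\,dt$ and $\mathcal B_R$ is the analogous quantity; a scaling count (using $|\Box\varphi_R|\lesssim R^{-2}\varphi_R^{1-2/\ell}$ on the annular region $R\le |x|,t\le 2R$ where the derivatives land) gives $\mathcal A_R \lesssim R^{N+1-2q'-bq'/q}$ and $\mathcal B_R \lesssim R^{N+1-2p'-ap'/p}$. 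Denoting by $\mathcal I_R = \int_Q |x|^a|v|^p\varphi_R$ and $\mathcal J_R = \int_Q |x|^b|u|^q\varphi_R$ the two weighted integrals, substituting one inequality into the other and absorbing yields, after simplification, a bound of the form $\mathcal I_R + R \lesssim R^{\gamma}$ (and symmetrically) with $\gamma$ a rational function of $N,a,b,p,q$; the exponent $\gamma<1$ is precisely equivalent to \eqref{newc1} failing in the relevant coordinate, so under \eqref{newc1} one of the two estimates forces $R \lesssim R^\gamma$ with $\gamma < 1$, impossible as $R\to\infty$. When $N=2$ the weight $|x|^a$ with $a>-2$ and the absence of decay make the argument even more direct: the boundary term grows like $R$ while the right-hand side is controlled by a slowly-growing power, giving the contradiction for all $p,q>1$.

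The main obstacle is the careful bookkeeping of the mixed Hölder estimate so that the exponent $\gamma$ comes out as the symmetric expression appearing in \eqref{newc1}: one must track how the weights $|x|^a$, $|x|^b$ redistribute under the two couplings, handle the degenerate weight at the origin (legitimate since $\O$ is a neighbourhood of $0$, so $|x|\ge c>0$ on $\O^c$ and the singularity is harmless — the weight only matters for its growth at infinity), and verify that the two homogeneities combine into the single quotient $\tfrac{2p(q+1)+pb+a}{pq-1}$ and its transpose. A secondary subtlety is the sign issue in (i) and (iii): for a general smooth $\O$ one only knows $\p\varphi/\p\nu\le 0$ on $\Gamma$, so the Dirichlet boundary term $-\int_\Gamma \tfrac{\p\varphi_R}{\p\nu} f\, d\sigma\,dt$ is nonnegative only if $f\ge 0$; when $\O=B_r$ one can choose radial test functions and compute this normal derivative explicitly, recovering a lower bound $\gtrsim R\, I_f$ regardless of the sign of $f$, which is why the sign hypothesis can be dropped for the ball. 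The case $p>2$ in (iii) enters because the mixed problem pairs $\varphi\in\mathcal D$ with $\psi\in\mathcal N$, and closing the loop requires the exponent $p'$ in the Hölder step applied to the $v$-inequality to be compatible with the Dirichlet scaling of $\varphi_R$; the condition $p>2$ guarantees $p'<2$, which is exactly what keeps $\mathcal B_R$ subcritical in that coupling.
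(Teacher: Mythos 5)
Your overall strategy (test functions, H\"older against the weighted integrals, Young, and a scaling count that produces the quotient $\frac{2p(q+1)+pb+a}{pq-1}$) matches the paper's, and your exponent arithmetic for the Neumann case is consistent with what the paper obtains. But there is a genuine gap in the Dirichlet case (parts (i) and (iii)), and it is precisely the point where the paper departs from the classical Mitidieri--Pohozaev/Zhang scheme. Your Dirichlet test function $\varphi_R(t,x)=\theta(t/R)^\ell\theta(|x|/R)^\ell\,\xi(x)$ with a \emph{fixed} cut-off $\xi$ near $\p\O$ cannot simultaneously (a) produce the boundary term $c\,I_f R$ and (b) keep the H\"older quotient $\mathcal A_R=\int |\Box\varphi_R|^{q'}|x|^{-bq'/q}\varphi_R^{1-q'}$ finite and subcritical. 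If $\xi$ vanishes on a neighbourhood of $\p\O$, then $\p_\nu\varphi_R=0$ on $\Gamma$ and the term $-\int_\Gamma \p_\nu\varphi_R\,f$ is identically zero, so there is nothing to contradict. If instead $\xi$ vanishes only on $\p\O$ with $\p_\nu\xi<0$, then near $\p\O$ (where the spatial cut-off at scale $R$ is $\equiv 1$) one has $\Delta\varphi_R=\theta(t/R)^\ell\Delta\xi$ with $\Delta\xi=O(1)$, and the factor $\varphi_R^{1-q'}\sim \mathrm{dist}(x,\p\O)^{-1/(q-1)}$ makes $\mathcal A_R$ \emph{divergent} for $1<q\le 2$; even for $q>2$ this fixed neighbourhood contributes $O(R)$ to $\mathcal A_R$ (the $t$-integral alone gives the factor $R$), and after Young's inequality this produces an error term of the same order $R$ as the boundary term $I_fR$, so the contradiction does not close. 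This is exactly why Zhang and Sun needed an iteration, and why the paper instead takes the spatial profile to be the \emph{harmonic} function $H_\O$ ($-\Delta H_\O=0$ in $\O^c$, $H_\O=0$ on $\p\O$), multiplied by a cut-off $\xi(x/T)^k$ acting only at spatial infinity: then $\Delta D_T$ is supported in $\{T<|x|<2T\}$, the singular weight $D_T^{-1/(q-1)}$ is never paired with a nonzero $\Delta D_T$ near $\p\O$, and Hopf's lemma gives $-\p_\nu H_\O\ge c_\O>0$ so the boundary term survives (for $f\ge 0$ in general, and for arbitrary $f$ when $\O=B_r$ since $\p_\nu H$ is then constant). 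Without this ingredient (or the iteration it replaces), your argument proves only part (ii).

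Two smaller remarks. First, the paper uses an anisotropic time scale $|\Sigma_t|\sim T^\theta$ with $\theta$ large rather than your $t\sim R$; as far as the final exponent is concerned the two choices agree (the extra power of $R$ in the space-time volume is compensated by the extra power in the boundary term), so this is a presentational rather than substantive difference, but the large $\theta$ is what lets the paper treat the $\p_{tt}$ contributions as uniformly negligible and avoid case distinctions. Second, your explanation of $p>2$ in (iii) is pointing at the right mechanism but should be stated as an integrability condition: the mixed coupling forces the weight $D_T^{-1/(p-1)}\sim H_\O^{-1/(p-1)}$ into the H\"older quotient for $\int|v||\Box N_T|$, and $H_\O^{-1/(p-1)}$ is integrable up to $\p\O$ (equivalently, Lemmas of the type \ref{LL1}--\ref{LL3} apply with exponent $\beta=-\frac{1}{p-1}>-1$) exactly when $p>2$.
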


\begin{rem}
The condition \eqref{newc1} is equivalent to
$$ I_f > 0 \;\mbox{ and }\; \delta > N - 2;\quad \mbox{or}\quad I_g > 0 \;\mbox{ and }\;\gamma > N-2,
$$
where
\begin{align}
\label{dg}
\delta = \frac{a+2+p(b+2)}{pq-1}, \quad \gamma = \frac{b+2+q(a+2)}{pq-1}.
\end{align}
Therefore, \eqref{newc1} always holds true when $N = 2$, $p, q > 1$ and $(a, b) \succ ({-2}, {-2})$.
\end{rem}

In fact, the constants $\delta$, $\gamma$ come from the scaling transform of the stationary problem
\begin{align}
\label{SPnew1}
-\Delta u = |x|^av^p, \;\; -\Delta v = |x|^bu^q.
\end{align}
Let $(u, v)$ be a solution to the system \eqref{SPnew1}, then for any $\lambda > 0$, $u_\lambda(x) = \lambda^\delta u(\l x), v_\l(x) = \l^\gamma v(\l x)$ satisfy still \eqref{SPnew1}.

\begin{rem}
Assume that $N \geq 3$, $p, q > 0$, $pq > 1$, and
\begin{align}
\label{newc2}
0< \min(\delta, \gamma) \leq \max(\delta, \gamma) < N-2.
\end{align}
Let $(u_*, v_*)(x) = (A_u|x|^{-\delta}, A_v|x|^{-\gamma})$ with $A_u, A_v > 0$ given by
$$A_u^{pq-1} = \delta(N-2-\delta)[\gamma(N-2-\gamma)]^p, \quad A_v^{pq-1} = \gamma(N-2-\gamma)[\delta(N-2-\delta)]^q.$$
We can check that $(u_*,v_*)$ is a positive solution to \eqref{SPnew1} in $\R^N\backslash\{0\}$. If $\Omega$ is star-shaped with respect to the origin, there holds $\frac{\p u_*}{\p\nu}, \frac{\p v_*}{\p \nu} \geq 0$ on $\p\Omega$ with respect to $\Omega^c$. So $(u_*, v_*)$ is a stationary solution to \eqref{P} and satisfies all the boundary conditions \eqref{BC1}, \eqref{BC2} and \eqref{BC3} for suitable $f, g \geq 0$. This means that the condition \eqref{newc1} is optimal for the nonexistence of global solution to the wave system \eqref{P}.
\end{rem}

\begin{rem}
\label{signfg}
Assume that $B_{r_1} \subset \O$ with $r_1 > 0$. Let $a, b \leq 0$, $p, q > 0$, $pq > 1$. Similarly as above, there are suitable $A_1, A_2 > 0$ such that
$$u(t,x) = A_1(t+1)^{-\frac{2(p+1)}{pq - 1}}, \quad v(t,x) = A_2(t+1)^{-\frac{2(q+1)}{pq - 1}}$$
satisfy $\Box u = r_1^a v^p$ and $\Box v = r_1^b u^q$ in $\R_+ \times \R^N$. Therefore, $(u, v)$ resolves \eqref{P} and satisfies all the boundary conditions \eqref{BC1}, \eqref{BC2} and \eqref{BC3} with $f = g =0$. This means the necessity of the assumption $(I_f, I_g)\succ (0, 0)$ in Theorem \ref{T1} when $a, b \leq 0$.
\end{rem}

Clearly, Theorems \ref{T1} yields nonexistence results for the corresponding stationary problem
\begin{align}
\label{StaP}
-\Delta u \geq |x|^a |v|^p, \;\; -\Delta v \geq |x|^b|u|^q \quad \mbox{in }\; \Omega^c.
\end{align}

\begin{cor}
\label{Cnew}
Let $N \geq 2$, $f, g \in L^1(\p\O)$ and $(a,b)\succ (-2, -2)$. Assume that $\left(I_f, I_g\right) \succ (0, 0)$ and $p, q > 1$ satisfy \eqref{newc1}. Then \eqref{StaP} has no weak solution if one of the following conditions holds true:
\begin{itemize}
\item[(i)] $f, g \in L^1(\p\O, \R_+)$, $(u, v) \succeq (f, g)$ on $\p \Omega$;
\item[(ii)] $\left(\frac{\p u}{\p \nu}, \frac{\p v}{\p \nu}\right) \succeq (f, g)$ on $\p \Omega$;
\item[(iii)] $f \in L^1(\p\O, \R_+)$, $p > 2$ and $\left(u,\frac{\p v}{\p \nu}\right) \succeq (f, g)$ on $\p \Omega$.
\end{itemize}
\end{cor}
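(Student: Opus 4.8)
The plan is to deduce Corollary \ref{Cnew} from Theorem \ref{T1} by the elementary observation that a stationary solution of \eqref{StaP}, viewed as a function on $Q$ that does not depend on $t$, is a global weak solution of \eqref{P} under the corresponding boundary condition. First I would make precise the notion of weak solution to \eqref{StaP}: it is the time-independent analogue of Definitions \ref{Def1}--\ref{Def3}. Concretely, set $\mathcal D_{\rm st}=\{\varphi\in C^2_{\rm cpt}(\O^c,\R_+):\varphi|_{\p\O}=0,\ \frac{\p\varphi}{\p\nu}\big|_{\p\O}\le 0\}$ and $\mathcal N_{\rm st}=\{\psi\in C^2_{\rm cpt}(\O^c,\R_+):\frac{\p\psi}{\p\nu}\big|_{\p\O}=0\}$; then, e.g.\ in case (i), $(u,v)\in L^q_{\rm loc}(\O^c)\times L^p_{\rm loc}(\O^c)$ is a weak solution of \eqref{StaP} with $(u,v)\succeq(f,g)$ on $\p\O$ if $\int_{\O^c}|x|^a|v|^p\varphi\,dx-\int_{\p\O}\frac{\p\varphi}{\p\nu}f\,d\sigma\le\int_{\O^c}u(-\Delta\varphi)\,dx$ and the companion inequality for $v$ hold for every $\varphi\in\mathcal D_{\rm st}$; in case (ii) one uses $\psi\in\mathcal N_{\rm st}$ and $+\int_{\p\O}\psi f\,d\sigma$ in place of $-\int_{\p\O}\frac{\p\varphi}{\p\nu}f\,d\sigma$, while case (iii) combines the two (Dirichlet-type test for the $u$-equation, Neumann-type for the $v$-equation). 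A quick application of Green's identity shows that this is the right weak notion, the sign conditions $\frac{\p\varphi}{\p\nu}\le 0$ and $u\ge f\ge 0$ on $\p\O$ (resp.\ $\frac{\p\psi}{\p\nu}=0$ and $\frac{\p v}{\p\nu}\ge g$) being exactly what makes the boundary terms appear with the correct sign.

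Assume, for contradiction, that $(u,v)$ is such a weak solution, say in case (i). Put $\tilde u(t,x)=u(x)$ and $\tilde v(t,x)=v(x)$ on $Q$; since every compact $K\subset Q$ lies in $[t_0,t_1]\times K'$ with $K'\subset\O^c$ compact, we have $\tilde u\in L^q_{\rm loc}(Q)$ and $\tilde v\in L^p_{\rm loc}(Q)$. Let $\varphi\in\mathcal D$. Because $\varphi$ is compactly supported in $(0,\infty)$ in the time variable, $\int_0^\infty\p_{tt}\varphi(t,x)\,dt=0$ for each $x$, hence $\int_Q\tilde u\,\Box\varphi\,dx\,dt=\int_Q u(-\Delta\varphi)\,dx\,dt=\int_0^\infty\big(\int_{\O^c}u(-\Delta\varphi(t,\cdot))\,dx\big)\,dt$, the interchange being legitimate as $u\in L^1_{\rm loc}(\O^c)$ and $\Delta\varphi$ is continuous with compact support in $Q$. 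For each fixed $t$ the slice $\varphi(t,\cdot)$ belongs to $\mathcal D_{\rm st}$, so the stationary weak inequality gives $\int_{\O^c}u(-\Delta\varphi(t,\cdot))\,dx\ge\int_{\O^c}|x|^a|v|^p\varphi(t,\cdot)\,dx-\int_{\p\O}\frac{\p\varphi}{\p\nu}(t,\cdot)f\,d\sigma$. Integrating over $t$ (using Tonelli on the nonnegative integrand $|x|^a|v|^p\varphi$) yields precisely \eqref{wks11}, and the companion inequality gives \eqref{wks12}. Thus $(\tilde u,\tilde v)$ is a global weak solution of \eqref{P}--\eqref{BC1}, which contradicts Theorem \ref{T1}(i) under the hypotheses of the Corollary (note that \eqref{newc1} is automatic when $N=2$, so the assumptions of the Corollary are exactly those of Theorem \ref{T1}). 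Cases (ii) and (iii) are handled in the same way, using $\mathcal N_{\rm st}$ for the Neumann-type equation and invoking parts (ii) and (iii) of Theorem \ref{T1} respectively; the sign assumptions $f,g\ge 0$ in (i) and $f\ge 0$, $p>2$ in (iii) are precisely those needed to apply Theorem \ref{T1}.

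I do not expect a serious obstacle here; the only two points that need a little care are, first, verifying that the weak formulation of \eqref{StaP} written above is consistent with the classical formulation via Green's identity, so that the boundary terms in \eqref{wks11}, \eqref{wks21}, \eqref{wks22} come out with the right sign in each of the three cases, and, second, justifying the slice-wise restriction of the test functions together with the Fubini/Tonelli interchanges, which is immediate from the local integrability of $(u,v)$ and the compactness of the supports involved.
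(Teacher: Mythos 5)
Your proposal is correct and is exactly the argument the paper intends (the paper simply asserts that Corollary \ref{Cnew} follows from Theorem \ref{T1}): extend a stationary weak solution trivially in time, observe that $\int_0^\infty\partial_{tt}\varphi\,dt=0$ kills the time derivatives in $\Box\varphi$, apply the stationary inequality slicewise and integrate in $t$ to recover \eqref{wks11}--\eqref{wks22}, then invoke the corresponding part of Theorem \ref{T1}. The slice-restriction and Fubini/Tonelli justifications you give are the right (and only) points needing care, and they go through as you describe.
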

We refind Corollary 1.3 in \cite{Sun} for the Dirichlet boundary condition case, where $a, b > -2$ was assumed. It seems to be the first time that such nonexistence results are showed for \eqref{StaP} under the Neumann type condition or the mixed boundary condition. Similarly, the sign condition for $f, g$ can be erased if $\O = B_r$.

\medskip
Theorems \ref{T1} yields also new result for the following wave inequality in exterior domain
\begin{align}
\label{PSED}
\Box u \geq |x|^a |u|^p  \; \mbox{ in }\;  (0,\infty)\times \Omega^c, \quad u(t,x) \geq f(x) \;\mbox{ on } \; (0,\infty)\times \partial \Omega,
\end{align}
and answers an open question proposed in Remark 1.5 of \cite{Zhang}.

\begin{cor}
\label{Cnew1}
Let $a>-2$, $f\in L^1(\partial \Omega, \R_+)$ and $N\geq 3$. If
\begin{equation}\label{psat}
I_f > 0 \quad \mbox{and}\quad 1<p<\frac{N+a}{N-2},
\end{equation}
there is no global weak solution in $L^p_{loc}(Q)$ to \eqref{PSED}. In other words, $p^*=\frac{N+a}{N-2}$
is the Fujita critical exponent for \eqref{PSED} if $N\geq 3$, $a > -2$.
\end{cor}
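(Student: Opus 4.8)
The plan is to obtain Corollary~\ref{Cnew1} as a direct consequence of Theorem~\ref{T1}(i) by \emph{diagonalizing} the system \eqref{P}--\eqref{BC1}, and then to produce an explicit global solution in the complementary range $p>\frac{N+a}{N-2}$ so as to conclude that this value is indeed the critical exponent.

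For the nonexistence part, I would observe that the scalar problem \eqref{PSED} is precisely the diagonal case of \eqref{P}--\eqref{BC1}: taking $v:=u$, $q:=p$, $b:=a$ and $g:=f$, both inequalities in \eqref{P} become $\Box u\geq |x|^a|u|^p$ and the boundary condition \eqref{BC1} becomes $u\geq f$ on $\Gamma$. At the level of weak solutions this is transparent: a function $u\in L^p_{loc}(Q)$ which is a global weak solution to \eqref{PSED} (in the sense analogous to Definition~\ref{Def1}, i.e.\ $\int_Q|x|^a|u|^p\varphi\,dx\,dt-\int_\Gamma\frac{\p\varphi}{\p\nu}f\,d\sigma\,dt\leq\int_Q u\,\Box\varphi\,dx\,dt$ for every $\varphi\in\mathcal D$) makes the pair $(u,u)\in L^q_{loc}(Q)\times L^p_{loc}(Q)$ satisfy both \eqref{wks11} and \eqref{wks12}, since with the above choices the two conditions coincide with the scalar one. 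All the structural hypotheses of Theorem~\ref{T1}(i) are then met: $a>-2$ gives $(a,b)=(a,a)\succ(-2,-2)$; $f=g\in L^1(\p\O,\R_+)$; $(I_f,I_g)=(I_f,I_f)\succ(0,0)$ because $I_f>0$; and $p=q>1$.

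It remains to verify that the Fujita condition \eqref{newc1} reduces, in this diagonal situation, to the hypothesis \eqref{psat}. Since $\mathrm{sgn}(I_f)=\mathrm{sgn}(I_g)=1$ and $p=q$, $a=b$, a one-line computation gives
$$
\frac{2p(q+1)+pb+a}{pq-1}=\frac{2q(p+1)+qa+b}{pq-1}=\frac{(p+1)(2p+a)}{(p-1)(p+1)}=\frac{2p+a}{p-1},
$$
so \eqref{newc1} is equivalent to $\frac{2p+a}{p-1}>N$, i.e.\ (using $p-1>0$ and $N-2>0$) to $p(N-2)<N+a$, that is, to $1<p<\frac{N+a}{N-2}$. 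Equivalently, one may invoke the Remark following Theorem~\ref{T1}: \eqref{psat} says exactly that $\delta=\frac{a+2}{p-1}>N-2$. Hence \eqref{psat} implies \eqref{newc1}, and Theorem~\ref{T1}(i) yields that \eqref{PSED} has no global weak solution in $L^p_{loc}(Q)$.

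Finally, to justify that $p^*=\frac{N+a}{N-2}$ is the critical exponent, I would exhibit a global solution when $p>p^*$. Set $\delta=\frac{a+2}{p-1}$, which lies in $(0,N-2)$ precisely because $a>-2$ and $p>p^*$, and let $u_*(x)=A\,|x|^{-\delta}$ with $A>0$ determined by $A^{p-1}=\delta(N-2-\delta)$; a direct computation gives $-\Delta u_*=|x|^{a}u_*^{\,p}$ in $\R^N\setminus\{0\}$, hence $\Box u_*=-\Delta u_*=|x|^a|u_*|^p$ in $Q$ since $u_*$ does not depend on $t$. As $\O$ is bounded and contains $0$, $u_*$ is smooth and bounded on $\O^c$, so it is a global (classical, a fortiori weak) solution to \eqref{PSED} with $f:=u_*|_{\p\O}\in L^1(\p\O,\R_+)$, which clearly has $I_f>0$; combined with the nonexistence for $1<p\leq p^*$, this identifies $p^*$ as the Fujita critical exponent for \eqref{PSED} when $N\geq 3$, $a>-2$. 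There is no genuine analytic obstacle here, all the substance being contained in Theorem~\ref{T1}; the only points needing a moment's care are checking that the diagonal pair $(u,u)$ genuinely lies in $L^q_{loc}(Q)\times L^p_{loc}(Q)$ and satisfies the weak formulation of the system (immediate once $q=p$), and the elementary algebra establishing the equivalence \eqref{newc1}~$\Longleftrightarrow$~\eqref{psat}.
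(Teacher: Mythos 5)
Your nonexistence argument is exactly the paper's: set $(v,b,q,g)=(u,a,p,f)$, note that the diagonal pair satisfies both \eqref{wks11} and \eqref{wks12}, and check that \eqref{newc1} collapses to $\frac{2p+a}{p-1}>N$, i.e.\ $p<\frac{N+a}{N-2}$; your algebra is correct. The only place you diverge is the sharpness claim: the paper simply cites Zhang's Proposition~6.1 (existence of global positive solutions for $p>\frac{N+a}{N-2}$ and small $f>0$), whereas you construct the explicit stationary solution $u_*(x)=A|x|^{-\delta}$ with $\delta=\frac{a+2}{p-1}\in(0,N-2)$ and $A^{p-1}=\delta(N-2-\delta)$. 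This is a perfectly valid, self-contained alternative --- it is the scalar analogue of the paper's own remark on the stationary system \eqref{SPnew1} --- and it even produces equality in the weak formulation after integrating by parts with $\varphi\in\mathcal D$. One small slip at the end: you say "combined with the nonexistence for $1<p\leq p^*$", but nonexistence is only established for $1<p<p^*$; the critical case $p=p^*$ is explicitly left open in Section~\ref{sec5}, so the phrasing should be strict inequality (this does not affect the corollary as stated).
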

Indeed, when $b=a$, $q=p > 1$
$$
N< \frac{2p(q+1)+pb+a}{pq-1} = \frac{2p+a}{p-1} \Longleftrightarrow p<\frac{N+a}{N-2}.
$$
Taking $(v, b, q, g) = (u, a, p, f)$  in \eqref{P}--\eqref{BC1}, we deduce the above nonexistence result from part (i) of Theorem \ref{T1}. Again the condition $f \geq 0$ is not necessary if $\O = B_r$. On the other hand, \eqref{PSED} admits positive solution for $a > -2$, $p>\frac{N+a}{N-2}$, $N \geq 3$ and  $f>0$ with $\|f\|_\infty$ is sufficiently small (see \cite[Proposition 6.1]{Zhang}).

\begin{rem}
Similarly, for the exterior Neumann inequality
\begin{align*}
\Box u \geq |x|^a |u|^p  \; \mbox{ in }\;  (0,\infty)\times \Omega^c, \quad \frac{\p u}{\p\nu}(t,x) \geq f(x) \;\mbox{ on } \; (0,\infty)\times \partial \Omega,
\end{align*}
we refind the critical exponent $p^*=\frac{N+a}{N-2}$ as indicated by \cite[Theorem 1.4]{Zhang}.
\end{rem}

Let us say some words for our approach which is based on suitable test functions and integral estimates. At first glance it looks like the method in \cite{Zhang, Sun} or similar works for the blow-up study in exterior domains, however some key choices are completely different.
\begin{itemize}
\item In most previous works, we use cut-off functions with fixed scaling for the time variable $t$, we obtain then integral estimates on cylinder type domain $Q_D = \Sigma_t\times \Sigma_x$ where $|\Sigma_x| \sim R^N$ and $\Sigma_t$ is of length $CR$ or $CR^2$. Here we consider a large scale for $t$ by choosing $|\Sigma_t| \sim R^\theta$ with $\theta$ large enough.
\item In \cite{Zhang, Sun}, they often use test functions with support away from the boundary $\p\O$, hence it's more difficult to observe the effect of the Dirichlet boundary condition. In this work, we make use of harmonic function on $\O^c$ with zero boundary condition, which permits to cut off only at infinity.
\end{itemize}
These ideas make our method more transparent, for example we avoid the iterative step used in \cite{Zhang, Sun}.

\medskip
The paper is organized as follows. In section 2, we establish some preliminary estimates that will be used in the proof of our main results.
In Section \ref{sec3}, we prove Theorem \ref{T1} in two dimensional case.
The proof of Theorem \ref{T1} for $N\geq 3$ is given in Section \ref{sec4}. Finally, some open questions are raised in Section \ref{sec5}.

\medskip
The symbols $C$ or $C_i$ denote always generic positive constants, which are independent of the scaling parameter $T$ and the solutions $u, v$. Their values could be changed from one line to another. We will write $B:= B_1$ for the unit ball, and we will use the notation $h \sim k$ for two positive functions or quantities, which satisfy $C_1 h \leq k \leq C_2 h$.

\section{Preliminary estimates}\label{sec2}
Let $N\geq 2$. We introduce the following harmonic function in $\O^c$:
\begin{align*}
-\Delta H_\Omega = 0  \;\; \mbox{in }\; \Omega^c, \quad H_\Omega = 0 \;\; \mbox{on }\; \p\Omega;
\end{align*}
and
\begin{align*}
\lim_{|x|\to\infty}\frac{H_\Omega(x)}{\ln|x|} = 1  \;\; \mbox{if }\; N = 2; \quad \lim_{|x|\to\infty} H_\Omega(x) = 1  \;\; \mbox{if }\; N \geq 3.
\end{align*}
Clearly $H_\Omega$ is uniquely determined and $H_\Omega > 0$ in $\overline\Omega^c$.

\medskip
We need also two cut-off functions. Let $\xi\in C^\infty(\R^N)$ satisfies
$$
0\leq \xi\leq 1;\quad \xi \equiv 1 \;\mbox{ in } B; \quad \xi(x) \equiv 0 \;\mbox{ if }\; |x|\geq 2.
$$
Fix also $\vartheta \in C^\infty(\R)$ such that
$$
\vartheta \geq 0, \quad \vartheta\not\equiv 0,\quad {\rm supp}(\vartheta) \subset (0, 1).
$$
For $0<T<\infty$, let
$$
\Xi_T(x) = H_\O(x)\xi\left(\frac{x}{T}\right)^k \quad \mbox{in }\; \O^c
$$
and
$$
\vartheta_T(t)=\vartheta\left(\frac{t}{T^\theta}\right)^k \quad \mbox{in }\; (0,\infty).
$$
Here, $k \geq 2$ and $\theta >0$ are constants to be chosen later.

\medskip
Consider
$$
D_T(t,x)=\vartheta_T(t)\Xi_T(x),\quad (t,x)\in (0,\infty)\times \O^c
$$
and
$$
N_T(x)=\vartheta_T(t) \xi\left(\frac{x}{T}\right)^k,\quad (t,x)\in (0,\infty)\times \O^c.
$$
Obviously, for any $T > {\rm dist}(0, \p\O)$ and $\theta>0$,
$$
(D_T,N_T)\in \mathcal{D}\times\mathcal{N}.
$$
Denote $H := H_B$, i.e.
\begin{eqnarray*}
H(x)=\left\{\begin{array}{lll}
\ln|x| &\mbox{if }\; N=2,\\
1-|x|^{2-N} &\mbox{if }\; N\geq 3.
\end{array}
\right.
\end{eqnarray*}

In the following, we will give some integral estimates for $D_T$ and $N_T$. Our approach uses only the asymptotic behavior of $H_\Omega$ and its derivatives at infinity. For simplicity, we will detail our proof only for the unit open ball $B$. The readers can be convinced easily that the same ideas work well for general smooth open sets $\Omega$. More precisely, as $B_{r_2} \subset \Omega \subset B_{r_1}$ with $r_1 > r_2 > 0$, thanks to the maximum principle, we have $H_{B_{r_1}} \leq H_\Omega \leq H_{B_{r_2}}$ in $B_{r_1}^c$. The standard elliptic theory yields that $|\nabla^k H_\Omega|(x) \sim |\nabla^k H|(x)$ as $|x|\to \infty$, for all $k \geq 0$.

\medskip
The following estimates follow from standard calculations.
\begin{lem}\label{LL1}
Let $N=2$, $\alpha\in \mathbb{R}$ and $\beta>-1$. There holds, as $T \to +\infty$,
$$
\int_{1<|x|< T} |x|^\alpha (\ln|x|)^\beta\,dx \sim
\left\{
\begin{array}{lll}
1 &\mbox{if }\; \alpha< -2,\\
(\ln T)^{\beta+1} &\mbox{if }\; \alpha=-2,\\
T^{\alpha+2}(\ln T)^\beta &\mbox{if }\;\alpha> -2;
\end{array}
\right.
$$
and for any $\alpha,\beta\in \mathbb{R}$, we have
$$
\int_{T<|x|<2 T} |x|^\alpha (\ln|x|)^\beta\,dx \sim T^{\alpha+2}(\ln T)^\beta,\quad \mbox{as }T\to +\infty.
$$
\end{lem}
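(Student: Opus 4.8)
The plan is to reduce everything to a one-dimensional integral via polar coordinates and then analyze the resulting exponential integral. Since $N = 2$, writing $x = r\omega$ with $r = |x|$ and $\omega \in \mathbb{S}^1$ gives
$$\int_{1<|x|<T} |x|^\alpha (\ln|x|)^\beta\,dx = 2\pi \int_1^T r^{\alpha+1}(\ln r)^\beta\,dr,$$
and after the substitution $s = \ln r$ this becomes $2\pi \int_0^{\ln T} e^{cs} s^\beta\,ds$ with $c := \alpha + 2$. The assumption $\beta > -1$ guarantees integrability of $s^\beta$ near $s = 0$, so this integral is well defined, and its large-$T$ behaviour is exactly what we need.

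First I would dispatch the two easy cases. If $\alpha < -2$, then $c < 0$ and $\int_0^{\ln T} e^{cs}s^\beta\,ds$ increases to the finite positive limit $\int_0^\infty e^{cs}s^\beta\,ds$ (convergent since $\beta > -1$ controls the behaviour at $s=0$ and $c < 0$ forces exponential decay at $s = \infty$); hence the integral is $\sim 1$. If $\alpha = -2$, then $c = 0$ and the integral equals $(\ln T)^{\beta+1}/(\beta+1) \sim (\ln T)^{\beta+1}$, using $\beta + 1 > 0$.

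The main point is the case $\alpha > -2$, i.e. $c > 0$, where one must show $\int_0^S e^{cs}s^\beta\,ds \sim S^\beta e^{cS}$ as $S := \ln T \to \infty$; this yields the claimed $T^{\alpha+2}(\ln T)^\beta$. I would split $\int_0^S = \int_0^{S/2} + \int_{S/2}^S$. The first piece is bounded by $C e^{cS/2}$, which is negligible against $S^\beta e^{cS}$. On $[S/2, S]$ the factor $s^\beta$ is comparable to $S^\beta$ (uniformly in $s$, for either sign of $\beta$, once $S$ is large), so
$$\int_{S/2}^S e^{cs}s^\beta\,ds \sim S^\beta \int_{S/2}^S e^{cs}\,ds = \frac{S^\beta\left(e^{cS} - e^{cS/2}\right)}{c} \sim \frac{1}{c}\, S^\beta e^{cS}.$$
Alternatively one may invoke L'Hôpital's rule on the ratio $\big(\int_0^S e^{cs}s^\beta\,ds\big)\big/\big(S^\beta e^{cS}\big)$, the denominator tending to $+\infty$ since $c>0$.

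Finally, for the annular estimate I would again pass to polar coordinates: $\int_{T<|x|<2T}|x|^\alpha(\ln|x|)^\beta\,dx = 2\pi\int_T^{2T} r^{\alpha+1}(\ln r)^\beta\,dr$. On $[T, 2T]$ one has $r^{\alpha+1} \sim T^{\alpha+1}$, and for $T$ large $\ln T \leq \ln r \leq \ln T + \ln 2$ with $(\ln r)^\beta \sim (\ln T)^\beta$ for either sign of $\beta$; multiplying by the length $T$ of the interval gives $T^{\alpha+2}(\ln T)^\beta$, valid for all $\alpha,\beta\in\mathbb{R}$. The only mildly delicate point throughout is the comparison $s^\beta \sim S^\beta$ on dyadic scales when $\beta < 0$, which reduces to $(1 + \ln 2/S)^\beta \to 1$; everything else is a direct computation.
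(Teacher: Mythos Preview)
Your argument is correct and supplies exactly the ``standard calculations'' that the paper omits; the paper gives no proof of this lemma beyond that phrase, so there is nothing substantive to compare. One cosmetic point: the bound ``$C e^{cS/2}$'' for $\int_0^{S/2} e^{cs}s^\beta\,ds$ should really be $C\,S^{\beta+1}e^{cS/2}$ (you still have the polynomial factor from $\int_0^{S/2} s^\beta\,ds$), but this is of course still $o(S^\beta e^{cS})$ and does not affect the conclusion.
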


\begin{lem}\label{LL3}
Let $N\geq3$, $\alpha\in \mathbb{R}$ and $\beta>-1$. There holds, as $T\to +\infty$.
$$
\int_{1<|x|< T} |x|^\alpha \left(1-|x|^{2-N}\right)^\beta dx \sim
\left\{
\begin{array}{lll}
1 &\mbox{if }\; \alpha< -N;\\
\ln T &\mbox{if }\; \alpha=-N,\\
T^{\alpha+N} &\mbox{if }\; \alpha> -N;
\end{array}
\right.
$$
and for any $\alpha,\beta\in \mathbb{R}$, we have
$$
\int_{T<|x|< 2T} |x|^\alpha \left(1-|x|^{2-N}\right)^\beta dx \sim
T^{\alpha+N},\quad \mbox{as }T\to +\infty.
$$
\end{lem}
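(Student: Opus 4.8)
The plan is a routine computation in polar coordinates, completely parallel to the two dimensional case in Lemma~\ref{LL1}. Writing $\rho=|x|$ and letting $c_N>0$ denote the surface area of the unit sphere in $\R^N$, we have
$$
\int_{1<|x|<T}|x|^\alpha\left(1-|x|^{2-N}\right)^\beta dx = c_N\int_1^T \rho^{\alpha+N-1}\,w(\rho)\,d\rho,\qquad w(\rho):=\left(1-\rho^{2-N}\right)^\beta,
$$
and similarly for the annular integral with $1,T$ replaced by $T,2T$. Since $N\geq 3$, the weight $w$ is continuous and positive on $(1,\infty)$, satisfies $w(\rho)\to 1$ as $\rho\to\infty$, and is bounded between two positive constants on every interval $[\rho_0,\infty)$ with $\rho_0>1$. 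The only point that requires care is the behaviour of $w$ near $\rho=1$, where $w$ blows up when $\beta<0$.

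First I would isolate the region near $\rho=1$. From the Taylor expansion $1-\rho^{2-N}=(N-2)(\rho-1)+O\big((\rho-1)^2\big)$ as $\rho\to 1^+$, one gets $w(\rho)\sim (N-2)^\beta(\rho-1)^\beta$, so that $\int_1^2\rho^{\alpha+N-1}w(\rho)\,d\rho$ is a finite positive constant precisely because $\beta>-1$. This is the sole place where the hypothesis $\beta>-1$ enters, and it produces the bounded contribution ``$1$'' appearing in the first alternative.

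On the complementary region $\rho\geq 2$ we have $1-2^{2-N}\leq w(\rho)\leq 1$, so $\rho^{\alpha+N-1}w(\rho)\sim \rho^{\alpha+N-1}$ and the problem reduces to the elementary estimate of $\int_2^T\rho^{\alpha+N-1}\,d\rho$, which is $\sim 1$ if $\alpha+N-1<-1$, $\sim \ln T$ if $\alpha+N-1=-1$, and $\sim T^{\alpha+N}$ if $\alpha+N-1>-1$; for the lower bounds in the last two cases one may simply integrate over $(T/2,T)$. Adding the two pieces — and observing that the $O(1)$ contribution from $(1,2)$ is of lower order than $\ln T$ and than $T^{\alpha+N}$, while it is of the same order $1$ when $\alpha<-N$ — yields the first claimed equivalence.

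For the annular integral one argues in the same way but more simply: for $\rho\in(T,2T)$ with $T$ large we have $w(\rho)\to 1$ uniformly, hence the integrand is $\sim\rho^{\alpha+N-1}$ uniformly on $(T,2T)$, and $\int_T^{2T}\rho^{\alpha+N-1}\,d\rho\sim T^{\alpha+N}$ for every $\alpha\in\R$ (when $\alpha=-N$ this integral equals $\ln 2$, comparable to $T^{0}=T^{\alpha+N}$). No restriction on $\beta$ is needed here since $\rho$ stays away from $1$. There is no genuine obstacle in the argument; the only nontrivial bookkeeping is at the inner endpoint $\rho=1$, handled by the expansion of $1-\rho^{2-N}$ together with the assumption $\beta>-1$.
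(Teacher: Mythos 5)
Your argument is correct and is precisely the ``standard calculation'' the paper invokes without detail: polar coordinates, integrability of $(1-\rho^{2-N})^{\beta}\sim(N-2)^{\beta}(\rho-1)^{\beta}$ near $\rho=1$ (where $\beta>-1$ is used), and elementary power integrals on $\rho\geq 2$, with the annulus case needing no restriction on $\beta$ since the weight is uniformly comparable to $1$ there. Nothing to add.
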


\subsection{Estimates involving $D_T$}
By the definitions of $\Xi_T$ and $\vartheta_T$, there holds
$$
\Delta \Xi_T(x) = \frac{2k}{T}\xi\left(\frac{x}{T}\right)^{k-1}\nabla H_\O(x)\nabla\xi\left(\frac{x}{T}\right) + \frac{H_\O(x)}{T^2}\left[k(k-1)\xi^{k-2}\left|\nabla\xi\right|^2 + k\xi^{k-1}\Delta\xi\right]\left(\frac{x}{T}\right)
$$
and
$$
\vartheta_T''(t)=\frac{1}{T^{2\theta}} \vartheta\left(\frac{t}{T^\theta}\right)^{k-2} \left[k(k-1)\vartheta'^2+k\vartheta \vartheta''\right]\left(\frac{t}{T^\theta}\right).
$$
We deduce then
\begin{lem}\label{LL10}
$$
|\Delta \Xi_T(x)| \leq C_k
\left(\frac{H_\O(x)}{T^2}+ \frac{|x|^{1-N}}{T}\right)\xi\left(\frac{x}{T}\right)^{k-2}\chi_{\{T < |x|< 2T\}} \quad \mbox{in } \O^c.
$$
and
$$
|\vartheta_T''(t)|\leq \frac{C_k}{T^{2\theta}}  \vartheta\left(\frac{t}{T^\theta}\right)^{k-2} \chi_{\{0< t< T^\theta\}} \quad \mbox{in } (0,\infty).
$$
\end{lem}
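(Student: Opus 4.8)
The plan is to simply differentiate the explicit expressions for $\Xi_T$ and $\vartheta_T$ given just above the statement, and then bound each resulting term using the known asymptotics of $H_\Omega$ and its derivatives. For the second estimate this is immediate: the chain rule gives $\vartheta_T'' = T^{-2\theta}\,\vartheta^{k-2}\bigl[k(k-1)\vartheta'^2 + k\vartheta\vartheta''\bigr](t/T^\theta)$, and since $\vartheta \in C^\infty(\R)$ with support in $(0,1)$, the bracketed factor is bounded by a constant depending only on $k$ (and the fixed function $\vartheta$), while the factor $\vartheta^{k-2}(t/T^\theta)$ is supported in $\{0 < t < T^\theta\}$. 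Collecting constants into $C_k$ gives exactly the claimed bound.

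For the first estimate, I would start from the displayed formula for $\Delta\Xi_T$, which has two groups of terms: one involving $\nabla H_\Omega \cdot \nabla\xi(x/T)$ with prefactor $\tfrac{2k}{T}\xi^{k-1}$, and one involving $H_\Omega$ with prefactor $T^{-2}$ times $\bigl[k(k-1)\xi^{k-2}|\nabla\xi|^2 + k\xi^{k-1}\Delta\xi\bigr](x/T)$. The crucial observation is that $\nabla\xi$, $|\nabla\xi|^2$ and $\Delta\xi$ are all supported in the annulus $\{1 \le |x/T| \le 2\}$, i.e. in $\{T \le |x| \le 2T\}$; hence every term carries the cutoff $\chi_{\{T<|x|<2T\}}$ (one can absorb the endpoints harmlessly). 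On that annulus the standard elliptic estimate quoted in the text, $|\nabla^j H_\Omega|(x) \sim |\nabla^j H|(x)$ as $|x|\to\infty$, applies: in particular $|\nabla H_\Omega|(x) \sim |x|^{1-N}$ (the gradient of $\ln|x|$ for $N=2$, of $1-|x|^{2-N}$ for $N\ge 3$, both of order $|x|^{1-N}$). So the first group is bounded by $\tfrac{C_k}{T}\,|x|^{1-N}\,\xi(x/T)^{k-1}\chi_{\{T<|x|<2T\}}$, and since $\xi^{k-1} \le \xi^{k-2}$ on $0\le\xi\le 1$, this fits under the stated right-hand side. The second group is bounded by $\tfrac{C_k}{T^2}\,H_\Omega(x)\,\xi(x/T)^{k-2}\chi_{\{T<|x|<2T\}}$ because $\|\nabla\xi\|_\infty$, $\|\nabla^2\xi\|_\infty$ are finite constants and $\xi^{k-1}\le\xi^{k-2}$ again. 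Adding the two groups and merging constants yields the asserted inequality.

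The only mild subtlety — and the closest thing to an obstacle — is the reduction from a general smooth bounded $\Omega$ to the model case: one must invoke the comparison $H_{B_{r_1}} \le H_\Omega \le H_{B_{r_2}}$ and the elliptic gradient estimate $|\nabla^k H_\Omega| \sim |\nabla^k H|$ at infinity (both already recalled in the paragraph preceding Lemma \ref{LL1}) to justify that the model bounds $H_\Omega(x) \lesssim H(x) \lesssim$ (the relevant power/log) and $|\nabla H_\Omega|(x) \lesssim |x|^{1-N}$ hold uniformly on $\{|x| \ge T\}$ for $T$ large. Since the statement only concerns the regime where the cutoff derivatives live, namely $|x| \ge T \to \infty$, these asymptotic estimates suffice and no uniform control near $\p\Omega$ is needed. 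Everything else is a routine application of the product and chain rules together with $0\le\xi\le1$, so I do not expect any real difficulty.
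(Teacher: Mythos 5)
Your proposal is correct and follows exactly the paper's route: the paper simply records the product/chain-rule expansions of $\Delta\Xi_T$ (using harmonicity of $H_\O$) and $\vartheta_T''$, then bounds them using the support of $\nabla\xi$, $\Delta\xi$ and $\vartheta$, the estimate $|\nabla H_\O|(x)\sim|x|^{1-N}$, and $\xi^{k-1}\leq\xi^{k-2}$. Nothing is missing.
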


As the harmonic function $H_\O$ has very different behaviors in dimension two comparing to higher dimensions, we separate the study in two cases: $N = 2$ and $N \geq 3$.
\begin{lem}\label{LL11}
Let $\tau\in \mathbb{R}$, $\theta > 0$, $m>1$, $k>\frac{2m}{m-1}$ and $N=2$. We have, as $T\to +\infty$,
$$
\int_{Q} |x|^{\frac{-\tau}{m-1}} D_T^{\frac{-1}{m-1}} |\partial_{tt} D_T|^{\frac{m}{m-1}} dxdt = \left\{
\begin{array}{lll}
O\left(T^{2-\frac{\tau + (m+1)\theta}{m-1}}\ln T\right) &\mbox{if }\; \tau<2(m-1),\\
O\left(T^{-\frac{(m+1)\theta}{m-1}}(\ln T)^{2}\right) &\mbox{if }\; \tau=2(m-1),\\
O\left(T^{-\frac{(m+1)\theta}{m-1}}\right) &\mbox{if }\; \tau> 2(m-1).
\end{array}
\right.
$$
\end{lem}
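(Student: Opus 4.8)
The plan is to compute the integral directly, exploiting the product structure $D_T(t,x) = \vartheta_T(t)\Xi_T(x)$. Since $\partial_{tt}D_T = \vartheta_T''(t)\Xi_T(x)$, the integrand factors as
$$
|x|^{\frac{-\tau}{m-1}} D_T^{\frac{-1}{m-1}} |\partial_{tt}D_T|^{\frac{m}{m-1}} = \left(|x|^{\frac{-\tau}{m-1}}\Xi_T(x)\right)\left(\vartheta_T(t)^{\frac{-1}{m-1}}|\vartheta_T''(t)|^{\frac{m}{m-1}}\right),
$$
so the double integral splits as a product of a spatial integral over $\O^c$ and a temporal integral over $(0,\infty)$.

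For the time factor, I would use the second estimate of Lemma \ref{LL10}, namely $|\vartheta_T''(t)| \leq C_k T^{-2\theta}\vartheta(t/T^\theta)^{k-2}\chi_{\{0<t<T^\theta\}}$, together with $\vartheta_T(t) = \vartheta(t/T^\theta)^k$. This gives
$$
\vartheta_T(t)^{\frac{-1}{m-1}}|\vartheta_T''(t)|^{\frac{m}{m-1}} \leq C_k\, T^{\frac{-2m\theta}{m-1}}\,\vartheta\!\left(\frac{t}{T^\theta}\right)^{\frac{k(m-2)-2m}{m-1}}\chi_{\{0<t<T^\theta\}};
$$
the hypothesis $k > \frac{2m}{m-1}$ is exactly what makes the exponent $\frac{k(m-2)-2m}{m-1}$ nonnegative (when $m \geq 2$; for $1<m<2$ one checks the exponent on $\vartheta$ stays above $-1$ after the change of variables, which is what is needed for integrability of a bounded-away-from-the-endpoints bump — actually since $\mathrm{supp}(\vartheta)\subset(0,1)$ and $\vartheta$ is smooth, the integral of any fixed power of $\vartheta$ over its support is just a finite constant, so no integrability issue arises at all). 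Substituting $s = t/T^\theta$ yields a factor $T^\theta$ from $dt$, so the time integral is $O(T^{\theta - \frac{2m\theta}{m-1}}) = O(T^{-\frac{(m+1)\theta}{m-1}})$, which matches the $\theta$-power appearing in all three cases of the conclusion.

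For the space factor I would insert $N=2$, so $\Xi_T(x) = H_\O(x)\xi(x/T)^k$ with $H_\O(x)\sim \ln|x|$ as $|x|\to\infty$, and estimate $\int_{\O^c}|x|^{\frac{-\tau}{m-1}}\Xi_T\,dx$. Splitting $\O^c$ into a bounded piece near $\p\O$ (contributing $O(1)$, since $H_\O$ is bounded there and $\tau/(m-1)$ times $|x|^{-1}$... rather, the integrand is bounded on a compact set) and the region $\{1<|x|<2T\}$ where $\xi(x/T)\leq 1$ and $H_\O(x)\lesssim \ln|x|$, I would apply the first estimate of Lemma \ref{LL1} with $\alpha = -\tau/(m-1)$ and $\beta = 1$. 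The trichotomy $\alpha < -2$, $\alpha = -2$, $\alpha > -2$ is precisely $\tau > 2(m-1)$, $\tau = 2(m-1)$, $\tau < 2(m-1)$, and produces the factors $O(1)$, $O((\ln T)^2)$, $O(T^{2-\frac{\tau}{m-1}}\ln T)$ respectively. Multiplying the space and time factors gives exactly the three displayed bounds.

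I do not expect a serious obstacle here; the lemma is a routine computation. The only points requiring a little care are: (i) verifying that the choice $k > \frac{2m}{m-1}$ together with $k\geq 2$ keeps every power of the cut-off bumps nonnegative (or at least integrable), so that the pointwise bounds from Lemma \ref{LL10} can be raised to the fractional powers $\frac{-1}{m-1}$ and $\frac{m}{m-1}$ without the expression blowing up where $\vartheta$ or $\xi$ vanish — here one uses that division by $D_T^{1/(m-1)}$ only occurs on the support where the numerator $|\partial_{tt}D_T|$ also forces $\xi(x/T)^{k-2}$ to appear with matching or higher power; and (ii) handling the bounded neighborhood of $\p\O$ separately, noting $\int_Q$ really means $\int_{(0,\infty)\times\O^c}$ and near $\p\O$ the function $H_\O$ vanishes, so one must again check that the ratio $D_T^{-1/(m-1)}|\partial_{tt}D_T|^{m/(m-1)}$ stays locally integrable — but there $\partial_{tt}D_T = \vartheta_T''\Xi_T$ carries the same power of $H_\O$ as $D_T$ does, so the negative power is compensated and the contribution is a harmless $O(T^{-(m+1)\theta/(m-1)})$.
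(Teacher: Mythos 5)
Your proposal is correct and is essentially the paper's proof: the integrand factors exactly into a temporal part, bounded by $CT^{-\frac{(m+1)\theta}{m-1}}$ via Lemma \ref{LL10}, times the spatial integral $\int_{1<|x|<2T}|x|^{-\tau/(m-1)}H(x)\,dx$, which Lemma \ref{LL1} with $\alpha=-\tau/(m-1)$, $\beta=1$ turns into the stated trichotomy. One algebraic slip worth fixing: the net power of $\vartheta$ in the time factor is $-\frac{k}{m-1}+\frac{(k-2)m}{m-1}=k-\frac{2m}{m-1}$, which is positive for every $m>1$ under the hypothesis $k>\frac{2m}{m-1}$, so your exponent $\frac{k(m-2)-2m}{m-1}$, the ensuing caveat about $1<m<2$, and the (incorrect) assertion that negative powers of a smooth bump $\vartheta$ are integrable over its support are all unnecessary.
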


\begin{proof}
Without loss of generality, let $\O = B$. By the definition of $D_T$ and Lemma \ref{LL10}, we get
\begin{align*}
\begin{split}
\int_{Q} |x|^{\frac{-\tau}{m-1}} D_T^{\frac{-1}{m-1}} |\partial_{tt} D_T|^{\frac{m}{m-1}} dxdt &=
\int_0^\infty \vartheta_T(t)^{\frac{-1}{m-1}} |\vartheta_T''(t)|^{\frac{m}{m-1}} dt \times\int_{B^c} |x|^{\frac{-\tau}{m-1}} \Xi_T(x) dx\\
&\leq C T^{\frac{-2\theta m}{m-1}} \int_0^{T^\theta} \vartheta^{k-\frac{2m}{m-1}}\left(\frac{t}{T^\theta}\right) dt \times \int_{1<|x|<2T}  |x|^{\frac{-\tau}{m-1}} H(x) dx\\
& \leq CT^{-\frac{(m+1)\theta}{m-1}} \int_{1<|x|< 2T} |x|^{\frac{-\tau}{m-1}} H(x) dx.
\end{split}
\end{align*}
Using Lemma \ref{LL1} with $\alpha=\frac{-\tau}{m-1}$ and $\beta=1$, we obtain the claimed estimate.
\end{proof}

Similarly, we deduce from Lemma \ref{LL3} that
\begin{lem}\label{LL12}
Let $\tau\in \mathbb{R}$, $\theta > 0$, $m>1$, $k>\frac{2m}{m-1}$  and $N\geq 3$. There holds, as $T\to +\infty$,
$$
\int_{Q} |x|^{\frac{-\tau}{m-1}} D_T^{\frac{-1}{m-1}} |\partial_{tt} D_T|^{\frac{m}{m-1}} dxdt = \left\{
\begin{array}{lll}
O\left(T^{N-\frac{\tau + (m+1)\theta}{m-1}} \right)&\mbox{if }\; \tau< N(m-1),\\
O\left(T^{-\frac{(m+1)\theta}{m-1}}\ln T \right)&\mbox{if }\; \tau=N(m-1),\\
O\left(T^{-\frac{(m+1)\theta}{m-1}}\right) &\mbox{if }\; \tau> N(m-1).
\end{array}
\right.
$$
\end{lem}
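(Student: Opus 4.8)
The plan is to mimic exactly the structure of the proof of Lemma \ref{LL11}, replacing the two-dimensional integral estimate (Lemma \ref{LL1}) by its higher-dimensional counterpart (Lemma \ref{LL3}). As in that proof, I would first reduce to the case $\O = B$, since by the elliptic estimates quoted just before Lemma \ref{LL1} we have $H_\O(x) \sim H(x)$ and $|\nabla H_\O|(x)\sim |\nabla H|(x) \sim |x|^{1-N}$ as $|x|\to\infty$, so the bounds are unaffected (up to the generic constants) by passing from $\O$ to $B$. With $\O = B$, the product structure $D_T(t,x) = \vartheta_T(t)\Xi_T(x)$ lets me factor
\[
\int_{Q} |x|^{\frac{-\tau}{m-1}} D_T^{\frac{-1}{m-1}} |\partial_{tt} D_T|^{\frac{m}{m-1}}\,dxdt
= \left(\int_0^\infty \vartheta_T^{\frac{-1}{m-1}}|\vartheta_T''|^{\frac{m}{m-1}}\,dt\right)\left(\int_{B^c} |x|^{\frac{-\tau}{m-1}}\Xi_T(x)\,dx\right),
\]
because $\partial_{tt} D_T = \vartheta_T'' \,\Xi_T$ and the powers of $\Xi_T$ recombine as $\Xi_T^{-1/(m-1)}\Xi_T^{m/(m-1)} = \Xi_T$.

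Next I would estimate the time factor. By the second bound in Lemma \ref{LL10}, $|\vartheta_T''(t)|\le C_k T^{-2\theta}\vartheta(t/T^\theta)^{k-2}\chi_{\{0<t<T^\theta\}}$, so
\[
\int_0^\infty \vartheta_T^{\frac{-1}{m-1}}|\vartheta_T''|^{\frac{m}{m-1}}\,dt \le C\,T^{\frac{-2\theta m}{m-1}}\int_0^{T^\theta}\vartheta\!\left(\frac{t}{T^\theta}\right)^{k-\frac{2m}{m-1}}dt = C\,T^{\frac{-2\theta m}{m-1}}\,T^\theta\int_0^1\vartheta(s)^{k-\frac{2m}{m-1}}\,ds,
\]
and the last integral is a finite positive constant precisely because $k > \frac{2m}{m-1}$ keeps the exponent $k-\frac{2m}{m-1}$ positive (and $\vartheta$ is smooth, compactly supported in $(0,1)$, not identically zero). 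This gives the time factor the order $T^{-\frac{(m+1)\theta}{m-1}}$, since $\frac{2m}{m-1}-1 = \frac{m+1}{m-1}$. For the spatial factor, on $B^c$ we have $\Xi_T(x) = H(x)\xi(x/T)^k$, which is supported in $\{1<|x|<2T\}$ and bounded there by $C\,H(x) = C(1-|x|^{2-N})$, so
\[
\int_{B^c} |x|^{\frac{-\tau}{m-1}}\Xi_T(x)\,dx \le C\int_{1<|x|<2T} |x|^{\frac{-\tau}{m-1}}\bigl(1-|x|^{2-N}\bigr)\,dx.
\]
Now I apply Lemma \ref{LL3} with $\alpha = \frac{-\tau}{m-1}$ and $\beta = 1$: the trichotomy there is governed by the sign of $\alpha + N$, and $\frac{-\tau}{m-1} + N$ is positive, zero, or negative exactly according as $\tau < N(m-1)$, $\tau = N(m-1)$, or $\tau > N(m-1)$. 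In the three cases the spatial integral is respectively $O(T^{N - \frac{\tau}{m-1}})$, $O(\ln T)$, and $O(1)$. Multiplying by the time factor $T^{-\frac{(m+1)\theta}{m-1}}$ yields exactly the three stated estimates.

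I do not expect a genuine obstacle here: the lemma is a routine consequence of the product structure, Lemma \ref{LL10}, and Lemma \ref{LL3}, and the excerpt itself flags it as following "similarly" from the $N=2$ case. The only points requiring a little care are bookkeeping ones — checking the arithmetic identity $\frac{2m}{m-1}-1=\frac{m+1}{m-1}$ so that the time factor comes out with the advertised exponent, verifying that the constraint $k > \frac{2m}{m-1}$ is exactly what makes the $t$-integral converge to a constant, and confirming that the condition $\tau \lessgtr N(m-1)$ translates correctly into $\alpha + N \gtrless 0$ when $\alpha = -\tau/(m-1)$ (here one uses $m>1$, so $m-1>0$ and the inequality direction is preserved). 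The reduction from general $\O$ to $B$ is the same soft argument used throughout the section and needs no new input.
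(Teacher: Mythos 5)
Your proof is correct and is exactly the argument the paper intends: it repeats the proof of Lemma \ref{LL11} verbatim, with the factorization $\partial_{tt}D_T=\vartheta_T''\,\Xi_T$, the time bound $T^{-\frac{(m+1)\theta}{m-1}}$ from Lemma \ref{LL10}, and Lemma \ref{LL3} (with $\alpha=\frac{-\tau}{m-1}$, $\beta=1$) in place of Lemma \ref{LL1}. The paper omits the proof precisely because it is "similar," and your bookkeeping of the exponents and of the trichotomy $\tau\lessgtr N(m-1)\Leftrightarrow\alpha\gtrless -N$ is accurate.
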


Furthermore, there holds
\begin{lem}\label{LL18}
Let $\tau  \in \R$, $\theta>0$, $m>1$, $k >\frac{2m}{m-1}$ and $N= 2$. Then
$$
\int_{Q} |x|^{\frac{-\tau}{m-1}} D_T^{\frac{-1}{m-1}} |\Delta D_T|^{\frac{m}{m-1}} dxdt=O\left(T^{\theta -\frac{\tau+2}{m-1}}\ln T\right),\quad \mbox{as }T\to +\infty.
$$
\end{lem}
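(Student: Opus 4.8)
The plan is to imitate the structure of the proof of Lemma \ref{LL11} (the $\Delta_t$-analogue in dimension two), splitting the space--time integral as a product of a temporal and a spatial integral and then invoking Lemma \ref{LL1}. First, write $D_T(t,x)=\vartheta_T(t)\Xi_T(x)$, so that $\Delta D_T=\vartheta_T(t)\,\Delta\Xi_T(x)$, and hence
$$
\int_Q |x|^{\frac{-\tau}{m-1}} D_T^{\frac{-1}{m-1}}|\Delta D_T|^{\frac{m}{m-1}}\,dxdt
=\left(\int_0^\infty \vartheta_T(t)\,dt\right)\left(\int_{\O^c}|x|^{\frac{-\tau}{m-1}}\Xi_T(x)^{\frac{-1}{m-1}}|\Delta\Xi_T(x)|^{\frac{m}{m-1}}\,dx\right).
$$
The temporal factor is immediate: since $\vartheta_T(t)=\vartheta(t/T^\theta)^k$ and ${\rm supp}(\vartheta)\subset(0,1)$, a change of variable gives $\int_0^\infty\vartheta_T(t)\,dt=T^\theta\int_0^1\vartheta^k = C\,T^\theta$. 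So the whole task reduces to showing that the spatial integral is $O\!\left(T^{-\frac{\tau+2}{m-1}}\ln T\right)$ as $T\to+\infty$, taking $\O=B$ without loss of generality.

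For the spatial integral, apply the bound on $|\Delta\Xi_T|$ from Lemma \ref{LL10}: on the support, which is contained in the annulus $\{T<|x|<2T\}$, we have $|\Delta\Xi_T(x)|\le C_k\big(\tfrac{H(x)}{T^2}+\tfrac{|x|^{1-N}}{T}\big)\xi(x/T)^{k-2}$, and in dimension $N=2$, $|x|^{1-N}=|x|^{-1}$ while $H(x)=\ln|x|\sim\ln T$ on that annulus; since $|x|^{-1}\sim T^{-1}$ there, both terms are comparable, so $|\Delta\Xi_T(x)|\le C\,T^{-2}\ln T\cdot\xi(x/T)^{k-2}$ on $\{T<|x|<2T\}$. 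Also $\Xi_T(x)=H(x)\xi(x/T)^k$, so $\Xi_T(x)^{\frac{-1}{m-1}}=(\ln|x|)^{\frac{-1}{m-1}}\xi(x/T)^{\frac{-k}{m-1}}\sim(\ln T)^{\frac{-1}{m-1}}\xi(x/T)^{\frac{-k}{m-1}}$ on the annulus. Multiplying, and using that the remaining power of $\xi(x/T)$ is $k-2-\frac{k}{m-1}\cdot\frac{?}{?}$ — here one must check it is nonnegative, which is exactly where the hypothesis $k>\frac{2m}{m-1}$ enters, since $(k-2)\cdot\frac{m}{m-1}-\frac{k}{m-1}=\frac{km-2m-k}{m-1}=\frac{k(m-1)-2m}{m-1}>0$ — one gets a bounded, nonnegative integrand of the form $C\,|x|^{\frac{-\tau}{m-1}}\,T^{-\frac{2m}{m-1}}(\ln T)^{\frac{m}{m-1}-\frac{1}{m-1}}=C\,|x|^{\frac{-\tau}{m-1}}\,T^{-\frac{2m}{m-1}}\ln T$ supported in $\{T<|x|<2T\}$.

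Finally, integrate in $x$ over the annulus $\{T<|x|<2T\}$ using the second displayed formula in Lemma \ref{LL1} (with $\alpha=\frac{-\tau}{m-1}$, $\beta=0$), which gives $\int_{T<|x|<2T}|x|^{\frac{-\tau}{m-1}}\,dx\sim T^{\,2-\frac{\tau}{m-1}}$. Collecting the factors, the spatial integral is $O\!\big(T^{-\frac{2m}{m-1}}\ln T\cdot T^{2-\frac{\tau}{m-1}}\big)=O\!\big(T^{\,2-\frac{\tau+2m}{m-1}}\ln T\big)=O\!\big(T^{-\frac{\tau+2}{m-1}}\ln T\big)$, since $2-\frac{\tau+2m}{m-1}=\frac{2(m-1)-\tau-2m}{m-1}=\frac{-\tau-2}{m-1}$. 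Multiplying by the temporal factor $C\,T^\theta$ yields the claimed bound $O\!\big(T^{\theta-\frac{\tau+2}{m-1}}\ln T\big)$. The only delicate point is the bookkeeping of the powers of the cut-off $\xi(x/T)$ so as to keep the integrand bounded and nonnegative; this is precisely guaranteed by the assumption $k>\frac{2m}{m-1}$, and once that is observed the estimate follows from Lemma \ref{LL1} exactly as in the proof of Lemma \ref{LL11}. A parenthetical remark: unlike Lemma \ref{LL11}, there is no case distinction here on the sign of $\tau-2(m-1)$, because the integration is over the single annulus $\{T<|x|<2T\}$ rather than the full region $\{1<|x|<2T\}$; the support of $\Delta\Xi_T$ being away from $\partial B$ is what removes the cases.
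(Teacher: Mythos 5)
Your proof is correct and follows essentially the same route as the paper's: factor the integral into a temporal part ($CT^\theta$) and a spatial part, bound $|\Delta\Xi_T|$ via Lemma \ref{LL10}, use $k>\frac{2m}{m-1}$ to keep the net power of $\xi$ nonnegative, and integrate over the annulus $\{T<|x|<2T\}$ with Lemma \ref{LL1}. The only cosmetic difference is that you absorb the two terms $\frac{H}{T^2}$ and $\frac{|x|^{-1}}{T}$ into a single bound $CT^{-2}\ln T$ on the annulus (they are not literally comparable, but the upper bound is valid), whereas the paper estimates the two resulting integrals separately; both give the same conclusion.
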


\begin{proof}
Consider still $\O = B$. By the definition of $D_T$,
\begin{align}\label{SP1}
\begin{split}
&\int_{Q} |x|^{\frac{-\tau}{m-1}} D_T^{\frac{-1}{m-1}} |\Delta D_T|^{\frac{m}{m-1}} dxdt\\
= & \; \int_0^\infty \vartheta_T(t)\,dt \times
\int_{B^c} |x|^{\frac{-\tau}{m-1}} H(x)^{\frac{-1}{m-1}} \xi\left(\frac{x}{T}\right)^{\frac{-k}{m-1}}
|\Delta \Xi_T(x)|^{\frac{m}{m-1}} dx\\
= & \; CT^\theta\int_{B^c} |x|^{\frac{-\tau}{m-1}} H(x)^{\frac{-1}{m-1}} \xi\left(\frac{x}{T}\right)^{\frac{-k}{m-1}}
|\Delta \Xi_T(x)|^{\frac{m}{m-1}} dx.
\end{split}
\end{align}
Applying Lemma \ref{LL10}, there holds, for any $|x| > 1$,
\begin{align}\label{SP2}
\begin{split}
& H(x)^{\frac{-1}{m-1}} \xi\left(\frac{x}{T}\right)^{\frac{-1}{m-1}}|\Delta \Xi_T(x)|^{\frac{m}{m-1}}\\
\leq & \; CH(x)^{\frac{-1}{m-1}} \xi^{k-\frac{2m}{m-1}}\left(\frac{x}{T}\right)\left(\frac{H(x)}{T^2}+ \frac{|x|^{1-N}}{T}\right)^\frac{m}{m-1}\chi_{\{T < |x|< 2T\}}\\
\leq & \; C\Big[T^{-\frac{2m}{m-1}}H(x) + T^{-\frac{m}{m-1}}H(x)^{\frac{-1}{m-1}}|x|^\frac{(1-N)m}{m-1}\Big] \chi_{\{T < |x|< 2T\}}.
\end{split}
\end{align}
Combining \eqref{SP1}--\eqref{SP2}, we obtain
\begin{align*}
\begin{split}
&\int_{Q} |x|^{\frac{-\tau}{m-1}} D_T^{\frac{-1}{m-1}} |\Delta D_T|^{\frac{m}{m-1}} dxdt\\
\leq & \; CT^{\theta -\frac{2m}{m-1}}\int_{T < |x| < 2T} |x|^{\frac{-\tau}{m-1}} H(x) dx + CT^{\theta-\frac{m}{m-1}}\int_{T < |x| < 2T}H(x)^{\frac{-1}{m-1}}|x|^\frac{-\tau + (1-N)m}{m-1} dx\\
= & \; O\left(T^{\theta-\frac{\tau+2}{m-1}}\ln T\right),
\end{split}
\end{align*}
as $T$ goes to $+\infty$. The last line is given by $N = 2$ and Lemma \ref{LL1}.
\end{proof}

Very similarly, using the expression of $H$ and Lemma \ref{LL3}, we have
\begin{lem}\label{LL19}
Let $\tau \in \R$, $\theta>0$, $m>1$, $k> \frac{2m}{m-1}$ and $N\geq 3$, then
$$
\int_{Q} |x|^{\frac{-\tau}{m-1}} D_T^{\frac{-1}{m-1}} |\Delta D_T|^{\frac{m}{m-1}} dxdt=O\left(T^{N-2 + \theta - \frac{\tau+2}{m-1}}\right),\quad \mbox{as }T\to +\infty.
$$
\end{lem}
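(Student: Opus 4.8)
The plan is to follow the lines of the proof of Lemma~\ref{LL18}, replacing the two-dimensional logarithmic profile by the bounded harmonic function $H(x) = 1 - |x|^{2-N}$ and invoking Lemma~\ref{LL3} in place of Lemma~\ref{LL1}. Assume $\O = B$ without loss of generality. Since $D_T(t,x) = \vartheta_T(t)\Xi_T(x)$ and $\Delta$ acts only on the spatial variable, one has $D_T^{-1/(m-1)}|\Delta D_T|^{m/(m-1)} = \vartheta_T(t)\,\Xi_T(x)^{-1/(m-1)}|\Delta\Xi_T(x)|^{m/(m-1)}$, and the time integral factors out as $\int_0^\infty \vartheta_T\,dt = CT^\theta$. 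Hence
$$\int_Q |x|^{\frac{-\tau}{m-1}} D_T^{\frac{-1}{m-1}}|\Delta D_T|^{\frac{m}{m-1}}\,dx\,dt = CT^\theta \int_{B^c} |x|^{\frac{-\tau}{m-1}} H(x)^{\frac{-1}{m-1}}\xi\!\left(\tfrac{x}{T}\right)^{\frac{-k}{m-1}}|\Delta\Xi_T(x)|^{\frac{m}{m-1}}\,dx,$$
exactly as in \eqref{SP1}.

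Next I would insert the pointwise estimate of $|\Delta\Xi_T|$ from Lemma~\ref{LL10}. Using $k > \frac{2m}{m-1}$ to bound $\xi^{k-2m/(m-1)}\le 1$ and the elementary inequality $(\alpha+\beta)^{\frac{m}{m-1}}\le C\big(\alpha^{\frac{m}{m-1}}+\beta^{\frac{m}{m-1}}\big)$ for $\alpha,\beta\ge 0$, one gets for $|x|>1$ the analogue of \eqref{SP2},
$$H(x)^{\frac{-1}{m-1}}\xi\!\left(\tfrac{x}{T}\right)^{\frac{-1}{m-1}}|\Delta\Xi_T(x)|^{\frac{m}{m-1}} \le C\Big[T^{-\frac{2m}{m-1}}H(x) + T^{-\frac{m}{m-1}}H(x)^{\frac{-1}{m-1}}|x|^{\frac{(1-N)m}{m-1}}\Big]\chi_{\{T<|x|<2T\}},$$
which splits the integral into
$$CT^{\theta-\frac{2m}{m-1}}\int_{T<|x|<2T}|x|^{\frac{-\tau}{m-1}}H(x)\,dx \;+\; CT^{\theta-\frac{m}{m-1}}\int_{T<|x|<2T}H(x)^{\frac{-1}{m-1}}|x|^{\frac{-\tau+(1-N)m}{m-1}}\,dx.$$
Since $H = 1-|x|^{2-N}\sim 1$ on the annulus $\{T<|x|<2T\}$ as $T\to\infty$, the annular part of Lemma~\ref{LL3} gives the first term $\sim T^{N-2+\theta-\frac{\tau+2}{m-1}}$ and the second term $\sim T^{\theta-\frac{N+\tau}{m-1}}$.

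Finally I would compare the two exponents: their difference equals $N-2+\frac{N-2}{m-1} = (N-2)\frac{m}{m-1}\ge 0$ because $N\ge 3$ and $m>1$, so the first term dominates and the whole quantity is $O\big(T^{N-2+\theta-\frac{\tau+2}{m-1}}\big)$, as claimed. No serious difficulty is expected here; the only point needing attention is this exponent bookkeeping, since — unlike the two-dimensional case, where the first term exceeded the second merely by a logarithmic factor — in dimension $N\ge 3$ it wins by a genuine power $(N-2)m/(m-1)$ of $T$, so there is no borderline case to track.
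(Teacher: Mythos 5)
Your proof is correct and follows essentially the same route the paper intends (the paper only remarks that Lemma \ref{LL19} is obtained ``very similarly'' to Lemma \ref{LL18}, replacing Lemma \ref{LL1} by Lemma \ref{LL3}): factor out the time integral $CT^\theta$, apply the pointwise bound of Lemma \ref{LL10}, and estimate the two resulting annular integrals. Your exponent bookkeeping checks out --- the first term has exponent $N-2+\theta-\frac{\tau+2}{m-1}$, the second $\theta-\frac{N+\tau}{m-1}$, and their difference is indeed $(N-2)\frac{m}{m-1}\ge 0$, so the stated bound follows.
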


\subsection{Estimates involving $N_T$}

\begin{lem}\label{LL13}
Let $\tau  \in \R$, $\theta>0$, $m>1$, $k>\frac{2m}{m-1}$ and
$N\geq 2$. There holds, as $T\to +\infty$,
$$
\int_{Q} |x|^{\frac{-\tau}{m-1}} N_T^{\frac{-1}{m-1}} |\partial_{tt} N_T|^{\frac{m}{m-1}} dxdt =\left\{
\begin{array}{lll}
O\left(T^{-\frac{(m+1)\theta}{m-1}}\ln T\right) &\mbox{if }\; \tau \geq N(m-1),\\
O\left(T^{N -\frac{\tau + (m+1)\theta}{m-1}}\right) &\mbox{if }\; \tau < N(m-1).
\end{array}
\right.
$$
\end{lem}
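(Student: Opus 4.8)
The plan is to exploit the product structure $N_T(t,x) = \vartheta_T(t)\,\xi\!\left(\tfrac{x}{T}\right)^{k}$, which makes the integral factor as a one–dimensional time integral times a space integral. First I would compute
$$
N_T^{\frac{-1}{m-1}} |\partial_{tt} N_T|^{\frac{m}{m-1}}
= \vartheta_T(t)^{\frac{-1}{m-1}} |\vartheta_T''(t)|^{\frac{m}{m-1}}\, \xi\!\left(\tfrac{x}{T}\right)^{k},
$$
using that $-\tfrac{k}{m-1} + \tfrac{km}{m-1} = k$, so that
$$
\int_Q |x|^{\frac{-\tau}{m-1}} N_T^{\frac{-1}{m-1}} |\partial_{tt} N_T|^{\frac{m}{m-1}}\, dx\,dt
= \left(\int_0^\infty \vartheta_T^{\frac{-1}{m-1}} |\vartheta_T''|^{\frac{m}{m-1}}\, dt\right)
\left(\int_{B^c} |x|^{\frac{-\tau}{m-1}} \xi\!\left(\tfrac{x}{T}\right)^{k} dx\right).
$$

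For the time factor, I would insert the bound on $\vartheta_T''$ from Lemma \ref{LL10}, which gives
$\vartheta_T^{\frac{-1}{m-1}}|\vartheta_T''|^{\frac{m}{m-1}} \leq C_k\, T^{\frac{-2\theta m}{m-1}}\, \vartheta\!\left(\tfrac{t}{T^\theta}\right)^{k - \frac{2m}{m-1}} \chi_{\{0<t<T^\theta\}}$. Since $k > \frac{2m}{m-1}$, the exponent of $\vartheta$ is nonnegative, so $\vartheta^{\,k - 2m/(m-1)}$ is bounded with support in $(0,1)$ after the change of variable $s = t/T^\theta$, and the time integral is $O\!\left(T^{\theta - \frac{2\theta m}{m-1}}\right) = O\!\left(T^{-\frac{(m+1)\theta}{m-1}}\right)$.

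For the space factor, I use $0 \leq \xi \leq 1$ and ${\rm supp}\,\xi(\cdot/T) \subset B_{2T}$ to bound it by $\int_{1<|x|<2T} |x|^{\frac{-\tau}{m-1}} dx$, and then apply Lemma \ref{LL1} (if $N=2$) or Lemma \ref{LL3} (if $N\geq 3$) with $\beta = 0$ and $\alpha = \frac{-\tau}{m-1}$: the result is $O(1)$ when $\tau > N(m-1)$, $O(\ln T)$ when $\tau = N(m-1)$, and $O\!\left(T^{N - \frac{\tau}{m-1}}\right)$ when $\tau < N(m-1)$. Multiplying the two factors and absorbing the $O(1)$ case into $O(\ln T)$ produces exactly the two claimed regimes $\tau \geq N(m-1)$ and $\tau < N(m-1)$.

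There is no genuine obstacle here: the estimate is of the same nature as Lemmas \ref{LL11}--\ref{LL12}, the only structural difference being that the $N_T$ cut-off does not involve the harmonic function $H_\Omega$, so the space integral is a pure power of $|x|$ and the argument is uniform in $N \geq 2$. The single point that requires a little care is verifying that the hypothesis $k > \frac{2m}{m-1}$ is precisely what makes the time integrand bounded after extracting the power of $T$, so that the $t$-integration contributes only the factor $T^\theta$ from the length of the support.
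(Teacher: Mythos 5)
Your proof is correct and follows essentially the same route as the paper: factor the integral using the product structure of $N_T$, bound the time factor by $O\bigl(T^{-\frac{(m+1)\theta}{m-1}}\bigr)$ via Lemma \ref{LL10} and the condition $k>\frac{2m}{m-1}$, and apply Lemmas \ref{LL1}--\ref{LL3} with $\beta=0$ to the spatial factor. Nothing further is needed.
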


\begin{proof}
Consider $\O = B$. By the definition of $N_T$ and Lemma \ref{LL10}, we get
\begin{align*}
\begin{split}
\int_{Q} |x|^{\frac{-\tau}{m-1}} N_T^{\frac{-1}{m-1}} |\partial_{tt} N_T|^{\frac{m}{m-1}} dxdt &= \int_0^\infty \vartheta_T(t)^{\frac{-1}{m-1}} |\vartheta_T''(t)|^{\frac{m}{m-1}} dt \times \int_{B^c} |x|^{\frac{-\tau}{m-1}} \xi^k \left(\frac{x}{T}\right) dx\\
& \leq CT^{-\frac{(m+1)\theta}{m-1}} \int_{1 < |x| < 2T} |x|^{\frac{-\tau}{m-1}} dx,
\end{split}
\end{align*}
The desired estimate follows directly from Lemmas \ref{LL1}--\ref{LL3} with $\alpha=\frac{-\tau}{m-1}$ and $\beta=0$.
\end{proof}

\begin{lem}\label{LL20}
Let $\tau  \in \R$, $\theta>0$, $m>1$, $k>\frac{2m}{m-1}$ and $N\geq 2$. Then
$$
\int_{Q} |x|^{\frac{-\tau}{m-1}} N_T^{\frac{-1}{m-1}} |\Delta N_T|^{\frac{m}{m-1}}\,dx\,dt=O\left(T^{N- 2 + \theta - \frac{\tau+2}{m-1}}\right),\quad \mbox{as }T\to +\infty.
$$
\end{lem}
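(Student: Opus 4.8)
The plan is to mimic the proof of Lemma \ref{LL18}/\ref{LL19}, but now with the simpler cut-off $N_T(t,x) = \vartheta_T(t)\,\xi(x/T)^k$ in place of $D_T$, so that the harmonic factor $H_\O$ is absent. First I would separate variables: since $\Delta N_T = \vartheta_T(t)\,\Delta\!\left[\xi(x/T)^k\right]$, we get
\begin{align*}
\int_{Q} |x|^{\frac{-\tau}{m-1}} N_T^{\frac{-1}{m-1}} |\Delta N_T|^{\frac{m}{m-1}}\,dx\,dt
= \int_0^\infty \vartheta_T(t)\,dt \times \int_{B^c} |x|^{\frac{-\tau}{m-1}} \xi\!\left(\tfrac{x}{T}\right)^{\frac{-k}{m-1}} \left|\Delta\!\left[\xi\!\left(\tfrac{x}{T}\right)^k\right]\right|^{\frac{m}{m-1}} dx,
\end{align*}
and the time integral equals $C T^\theta$ because $\int_0^\infty \vartheta_T(t)\,dt = T^\theta \int_0^1 \vartheta(s)^k\,ds$.

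Next I would estimate the spatial factor. A direct computation gives
$$
\Delta\!\left[\xi\!\left(\tfrac{x}{T}\right)^k\right] = \frac{1}{T^2}\left[k(k-1)\xi^{k-2}|\nabla\xi|^2 + k\,\xi^{k-1}\Delta\xi\right]\!\left(\tfrac{x}{T}\right),
$$
which is supported in $\{T < |x| < 2T\}$ (since $\xi\equiv1$ on $B$ and $\xi\equiv0$ outside $B_2$) and is bounded by $C_k T^{-2}\,\xi(x/T)^{k-2}$ there. Raising to the power $m/(m-1)$ and multiplying by $\xi(x/T)^{-k/(m-1)}$ leaves a factor $\xi(x/T)^{k - 2m/(m-1)}$, which is bounded since $k > \frac{2m}{m-1}$. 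Hence the spatial integrand is $\leq C T^{-\frac{2m}{m-1}} |x|^{\frac{-\tau}{m-1}} \chi_{\{T<|x|<2T\}}$, and by the second part of Lemma \ref{LL1} (for $N=2$) or Lemma \ref{LL3} (for $N\geq 3$) with $\alpha = \frac{-\tau}{m-1}$, $\beta = 0$,
$$
\int_{T<|x|<2T} |x|^{\frac{-\tau}{m-1}}\,dx \sim T^{N - \frac{\tau}{m-1}}.
$$
Collecting the factors $T^\theta \cdot T^{-\frac{2m}{m-1}} \cdot T^{N-\frac{\tau}{m-1}} = T^{N - 2 + \theta - \frac{\tau+2}{m-1}}$ gives the claim, uniformly in $N \geq 2$.

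There is essentially no obstacle here: unlike the $D_T$ estimates, there is no competition between two terms (the harmonic weight $H$ does not appear), so both dimension cases collapse to a single bound, and the only thing to be careful about is that the support of $\Delta[\xi(x/T)^k]$ lies in the annulus $\{T<|x|<2T\}$ so that the crude pointwise bound $\xi(x/T)^{k-2m/(m-1)} \leq 1$ costs nothing. The constraint $k > \frac{2m}{m-1}$ is exactly what is needed to absorb the negative power $\xi^{-k/(m-1)}$ coming from $N_T^{-1/(m-1)}$.
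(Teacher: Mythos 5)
Your proof is correct and follows essentially the same route as the paper: the paper simply records the pointwise bound $\left|\Delta\left[\xi^k\left(\frac{x}{T}\right)\right]\right|\leq C_kT^{-2}\xi\left(\frac{x}{T}\right)^{k-2}\chi_{\{T<|x|<2T\}}$ and refers to the argument of Lemma \ref{LL18}, which is exactly the separation of variables, absorption of $\xi^{-k/(m-1)}$ via $k>\frac{2m}{m-1}$, and annulus integration that you carry out. Your observation that the absence of the harmonic weight $H$ makes the two dimension cases collapse into a single bound matches the paper's intent, and your exponent bookkeeping $T^{\theta}\cdot T^{-\frac{2m}{m-1}}\cdot T^{N-\frac{\tau}{m-1}}=T^{N-2+\theta-\frac{\tau+2}{m-1}}$ is exact.
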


\begin{proof}
As in Lemma \ref{LL10}, there holds
\begin{align}
\label{newExi}
\left|\Delta \left[\xi^k\left(\frac{x}{T}\right)\right]\right| \leq C_kT^{-2}\xi\left(\frac{x}{T}\right)^{k-2}\chi_{\{T < |x|< 2T\}}.
\end{align}
We can claim the mentioned estimate similarly as for Lemmas \ref{LL18}. \end{proof}

\subsection{Estimates involving $D_T$ and $N_T$}
The following are some estimates necessary to handle the mixed boundary problem \eqref{P}--\eqref{BC3}.

\begin{lem}\label{LL16}
Let $\tau  \in \R$, $\theta>0$, $m>2$, $k>\frac{2m}{m-1}$
  and $N \geq 2$. There holds, as $T \to +\infty$,
$$
\int_{Q} |x|^{\frac{-\tau}{m-1}} D_T^{\frac{-1}{m-1}} |\partial_{tt} N_T|^{\frac{m}{m-1}} dxdt =
\left\{
\begin{array}{lll}
O\left(T^{-\frac{(m+1)\theta}{m-1}}\ln T\right) &\mbox{if }\; \tau \geq N(m-1),\\

O\left(T^{N -\frac{\tau + (m+1)\theta}{m-1}}\right) &\mbox{if }\; \tau < N(m-1).
\end{array}
\right.
$$
\end{lem}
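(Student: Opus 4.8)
The plan is to follow the scheme of Lemmas \ref{LL13} and \ref{LL18}, taking $\O=B$ without loss of generality (the general case reduces to this one via $H_\O(x)\sim\mathrm{dist}(x,\p\O)$ near $\p\O$ and $H_\O\sim H$ at infinity, as explained at the beginning of this section). Since $\partial_{tt}N_T(t,x)=\vartheta_T''(t)\,\xi(x/T)^k$ while $D_T^{-1/(m-1)}=\vartheta_T(t)^{-1/(m-1)}H(x)^{-1/(m-1)}\xi(x/T)^{-k/(m-1)}$, the integrand factorizes and the powers of $\xi$ combine to $\xi^{km/(m-1)-k/(m-1)}=\xi^{k}$, giving
\[
\int_{Q} |x|^{\frac{-\tau}{m-1}} D_T^{\frac{-1}{m-1}} |\partial_{tt} N_T|^{\frac{m}{m-1}}\,dx\,dt
=\left(\int_0^\infty \vartheta_T^{\frac{-1}{m-1}}|\vartheta_T''|^{\frac{m}{m-1}}\,dt\right)\left(\int_{B^c} |x|^{\frac{-\tau}{m-1}} H(x)^{\frac{-1}{m-1}}\xi\!\left(\tfrac{x}{T}\right)^{k}\,dx\right).
\]
This is essentially the analogue of Lemma \ref{LL13}, the only difference being that here $D_T^{-1/(m-1)}$ contributes the weight $H(x)^{-1/(m-1)}$ (because $\partial_{tt}N_T$ carries no power of $H_\O$), so the space integral has exponent $\beta=-1/(m-1)$ on $H$ instead of $\beta=0$.

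For the time factor, the second estimate of Lemma \ref{LL10} together with $k>\frac{2m}{m-1}$ yields $\vartheta_T^{-1/(m-1)}|\vartheta_T''|^{m/(m-1)}\le CT^{-2\theta m/(m-1)}\vartheta(t/T^\theta)^{k-2m/(m-1)}\chi_{\{0<t<T^\theta\}}\le CT^{-2\theta m/(m-1)}\chi_{\{0<t<T^\theta\}}$, hence the time integral is $O(T^{-(m+1)\theta/(m-1)})$. For the space factor, observe that $H(x)^{-1/(m-1)}\sim\mathrm{dist}(x,\p B)^{-1/(m-1)}$ near $\p B$, which is integrable \emph{precisely because} $m>2$, while $H(x)^{-1/(m-1)}=(1-|x|^{2-N})^{-1/(m-1)}$ (resp. $(\ln|x|)^{-1/(m-1)}$ for $N=2$) stays bounded once $|x|$ is bounded away from $1$; since $\xi(\cdot/T)^k$ is supported in $\{|x|<2T\}$, Lemma \ref{LL3} for $N\ge3$ (resp. Lemma \ref{LL1} for $N=2$) applies on $\{1<|x|<2T\}$ with $\alpha=\frac{-\tau}{m-1}$ and $\beta=\frac{-1}{m-1}>-1$, giving $O(1)$ if $\tau>N(m-1)$, $O(\ln T)$ if $\tau=N(m-1)$, and $O(T^{N-\tau/(m-1)})$ if $\tau<N(m-1)$ (for $N=2$ an additional factor $(\ln T)^{-1/(m-1)}<1$ only improves the bound). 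Multiplying the two factors gives exactly the claimed dichotomy.

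The sole point that goes beyond the routine bookkeeping of the earlier lemmas is the integrability of $H^{-1/(m-1)}$ across $\p\O$; this is exactly what the hypothesis $m>2$ secures, and it is the reason why this and the subsequent estimates for the mixed boundary problem require $m>2$, rather than the weaker $m>1$ that suffices for the $\mathcal D$- or $\mathcal N$-estimates alone.
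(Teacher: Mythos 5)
Your proposal is correct and follows essentially the same route as the paper: factor the integral into a time part bounded by $O(T^{-(m+1)\theta/(m-1)})$ via Lemma \ref{LL10}, observe that the powers of $\xi$ combine to $\xi^k$ so the space part is $\int_{1<|x|<2T}|x|^{-\tau/(m-1)}H(x)^{-1/(m-1)}dx$, and apply Lemmas \ref{LL1}--\ref{LL3} with $\alpha=\frac{-\tau}{m-1}$ and $\beta=\frac{-1}{m-1}\in(-1,0)$, which is exactly where the hypothesis $m>2$ enters. Your added remark on why $m>2$ is needed (integrability of $H^{-1/(m-1)}$ across $\p\O$) is the same point the paper makes parenthetically.
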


\begin{proof}
Without loss of generality, consider $\O = B$. By the definitions of $D_T$ and $N_T$, thanks to Lemma \ref{LL10}, we get
\begin{align*}
\begin{split}
&\int_{Q} |x|^{\frac{-\tau}{m-1}} D_T^{\frac{-1}{m-1}} |\partial_{tt} N_T|^{\frac{m}{m-1}}\,dx\,dt\\
= &\;
\int_0^\infty \vartheta_T(t)^{\frac{-1}{m-1}} |\vartheta_T''(t)|^{\frac{m}{m-1}} dt \int_{B^c} |x|^{\frac{-\tau}{m-1}} H(x)^{-\frac{1}{m-1}} \xi^k\left(\frac{x}{T}\right) dx\\
\leq & \; CT^{-\frac{(m+1)\theta}{m-1}}\int_{1< |x| < 2T} |x|^{\frac{-\tau}{m-1}} H(x)^{-\frac{1}{m-1}} dx.
\end{split}
\end{align*}
Applying Lemmas \ref{LL1}--\ref{LL3} with
$\alpha=\frac{-\tau}{m-1}$ and $\beta =\frac{-1}{m-1} \in (-1, 0)$ (here $m > 2$ was used), we obtain the desired estimate.
\end{proof}

Similarly, we have
\begin{lem}\label{LL23}
Let $\tau\in \mathbb{R}$, $\theta>0$, $m>1$, $k>\frac{2m}{m-1}$ and $N \geq 2$. Then
$$
\int_{Q} |x|^{\frac{-\tau}{m-1}} D_T^{\frac{-1}{m-1}} |\Delta N_T|^{\frac{m}{m-1}}\,dx\,dt= O\left(T^{N -2 + \theta - \frac{\tau+2}{m-1}}\right),\quad \mbox{as }T\to +\infty.
$$
\end{lem}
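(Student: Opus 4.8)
The plan is to follow exactly the scheme of Lemma \ref{LL20} (and of Lemma \ref{LL18}), the only new feature being that the weight $N_T^{-1/(m-1)}$ is replaced by $D_T^{-1/(m-1)}$, which introduces an extra factor $H_\O(x)^{-1/(m-1)}$. First I would reduce to $\O = B$ as usual, and note that since $N_T(t,x) = \vartheta_T(t)\,\xi(x/T)^k$ contains no harmonic factor, $\Delta N_T = \vartheta_T(t)\,\Delta[\xi(x/T)^k]$, so \eqref{newExi} gives the pointwise bound
$$
|\Delta N_T(t,x)| \le C_k\, T^{-2}\,\vartheta_T(t)\,\xi\!\left(\tfrac{x}{T}\right)^{k-2}\chi_{\{T < |x|< 2T\}}.
$$
Writing $D_T^{-1/(m-1)} = \vartheta_T(t)^{-1/(m-1)}H(x)^{-1/(m-1)}\xi(x/T)^{-k/(m-1)}$ and separating variables, the $t$-integral collapses to $\int_0^\infty \vartheta_T(t)^{-1/(m-1)}\vartheta_T(t)^{m/(m-1)}\,dt = \int_0^\infty \vartheta_T(t)\,dt = C\,T^\theta$.

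For the spatial integral I would use $k > \tfrac{2m}{m-1}$ to absorb the leftover power of $\xi$: combining the bound above with $\xi(x/T)^{-k/(m-1)}$ yields a factor $\xi(x/T)^{\,k - 2m/(m-1)} \le 1$ on the support, hence
$$
\int_{B^c} |x|^{\frac{-\tau}{m-1}} H(x)^{\frac{-1}{m-1}} \xi\!\left(\tfrac{x}{T}\right)^{\frac{-k}{m-1}}\big|\Delta[\xi(x/T)^k]\big|^{\frac{m}{m-1}} dx \le C\,T^{-\frac{2m}{m-1}}\int_{T<|x|<2T}|x|^{\frac{-\tau}{m-1}}H(x)^{\frac{-1}{m-1}}\,dx.
$$
The key observation is that on the annulus $\{T<|x|<2T\}$ the factor $H(x)^{-1/(m-1)}$ is harmless: for $N=2$, $H(x)=\ln|x|\sim\ln T$, so it even contributes an extra $(\ln T)^{-1/(m-1)}\to 0$; for $N\ge 3$, $H(x)=1-|x|^{2-N}\sim 1$. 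Applying the second (annulus) estimate of Lemma \ref{LL1} when $N=2$ and of Lemma \ref{LL3} when $N\ge 3$, with $\alpha=\tfrac{-\tau}{m-1}$, gives in both cases $\int_{T<|x|<2T}|x|^{-\tau/(m-1)}H(x)^{-1/(m-1)}\,dx = O\big(T^{\,N-\tau/(m-1)}\big)$.

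Putting the pieces together, the integral is $O\big(T^{\theta - \frac{2m}{m-1} + N - \frac{\tau}{m-1}}\big)$, and I would finish by the elementary identity $-\tfrac{2m+\tau}{m-1} = -2 - \tfrac{\tau+2}{m-1}$, which turns the exponent into $N - 2 + \theta - \tfrac{\tau+2}{m-1}$, as claimed. I do not expect any genuine obstacle here: the argument is a direct transcription of the proof of Lemma \ref{LL20}. The only point worth flagging is why the hypothesis $m>1$ suffices (in contrast to $m>2$ needed in Lemma \ref{LL16}): the negative power of $H$ is integrated only over the annulus $\{T<|x|<2T\}$, where $H$ is bounded away from zero, so the constraint $\beta>-1$ from the first parts of Lemmas \ref{LL1}--\ref{LL3} never comes into play.
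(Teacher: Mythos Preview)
Your proof is correct and follows essentially the same route as the paper: separate variables, use \eqref{newExi} for $\Delta[\xi(x/T)^k]$, absorb the $\xi$-power via $k>\frac{2m}{m-1}$, and observe that $H^{-1/(m-1)}$ is bounded on the annulus $\{T<|x|<2T\}$ before integrating $|x|^{-\tau/(m-1)}$. Your closing remark explaining why $m>1$ (rather than $m>2$) suffices here is a nice addition that the paper leaves implicit.
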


\begin{proof}
Using \eqref{newExi}, there holds, for large $T$,
\begin{align*}
\begin{split}
&\int_{Q} |x|^{\frac{-\tau}{m-1}} D_T^\frac{-1}{m-1} |\Delta N_T|^\frac{m}{m-1} dxdt\\
= & \; \int_0^\infty \vartheta_T(t) dt \times
\int_{\O^c} |x|^\frac{-\tau}{m-1} H(x)^\frac{-1}{m-1} \xi\left(\frac{x}{T}\right)^\frac{-k}{m-1} \left|\Delta\left[\xi^k\left(\frac{x}{T}\right)\right]\right|^\frac{m}{m-1} dx\\
\leq & \; CT^{\theta - \frac{2m}{m-1}}\int_{T< |x| < 2T} |x|^\frac{-\tau}{m-1} H(x)^\frac{-1}{m-1} \xi\left(\frac{x}{T}\right)^{k-\frac{2m}{m-1}}dx\\
\leq & \; CT^{\theta - \frac{2m}{m-1}}\int_{T< |x| < 2T} |x|^\frac{-\tau}{m-1} H(x)^\frac{-1}{m-1}dx\\
\leq & \; CT^{\theta - \frac{2m}{m-1}}\int_{T< |x| < 2T} |x|^\frac{-\tau}{m-1}dx\\
= &\; CT^{N -2 + \theta - \frac{\tau+2}{m-1}}.
\end{split}
\end{align*}
So we are done.
\end{proof}

\section{Two dimensional situation}\label{sec3}
In this section, we prove successively the parts ${\rm (i)}$, ${\rm (ii)}$ and ${\rm (iii)}$ of Theorem \ref{T1} for $N = 2$. We will detail the proof for ${\rm (i)}$. The proofs for parts ${\rm (ii)}$ and ${\rm (iii)}$ are similar, so we proceed more quickly. Let $p,q>1$ and fix
\begin{equation}\label{ask}
k>\max\left\{\frac{2p}{p-1},\frac{2q}{q-1}\right\}.
\end{equation}
As mentioned above, we consider only $\O = B$, and we explain in Remarks \ref{rnew1}--\ref{rnew2} how the same ideas work for general case.

\subsection{Proof of part (i)}
We argue by contradiction by assuming that the pair $(u,v)\in L^q_{loc}(Q)\times L^p_{loc}(Q)$ is a global weak solution to \eqref{P}--\eqref{BC1}.  For $T> 1$ and $\theta>0$, taking $\varphi=D_T$ in \eqref{wks11}, then
$$
\int_{Q} |x|^a |v|^p D_T  dxdt-\int_{\Gamma}\frac{\partial D_T}{\partial \nu} f\,d\sigma\,dt
\leq
\int_{Q}|u| |\Box D_T| dxdt.
$$
Moreover, as $\p_\nu H$ is constant on $\p B$,
\begin{align}
\label{intpO}
\begin{split}-\int_{\Gamma}\frac{\partial D_T}{\partial \nu} f d\sigma dt&=
C \int_0^\infty \vartheta\left(\frac{s}{T^\theta}\right)^k ds\times \int_{\partial B} f(x) d\sigma = CI_fT^\theta,
\end{split}
\end{align}
where $C$ is a constant depending only on $H$ and $\vartheta$. This yields
\begin{equation}\label{wkss11}
\int_{Q} |x|^a |v|^p D_T \,dxdt+ I_f T^\theta
\leq C \int_{Q}|u| |\Box D_T| dxdt.
\end{equation}
Similarly, taking $\varphi=D_T$ in \eqref{wks12}, we get
\begin{equation}\label{wkss12}
\int_{Q} |x|^b |u|^q D_T dxdt +I_g T^\theta \leq C \int_{Q}|v| |\Box D_T| dxdt.
\end{equation}
By H\"older's inequality, there holds
\begin{equation}\label{ESST1}
\int_{Q} |u| |\partial_{tt}D_T| dxdt\leq \left(\int_{Q} |x|^b |u|^q D_T dxdt\right)^{\frac{1}{q}} \left(\int_{Q} |x|^{\frac{-b}{q-1}} D_T^{\frac{-1}{q-1}}  |\partial_{tt}D_T|^{\frac{q}{q-1}} dxdt\right)^{\frac{q-1}{q}}.
\end{equation}
Using Lemma \ref{LL11} with $\tau=b$ and $m=q$, we obtain
\begin{equation}\label{ESST2}
\int_{Q} |x|^{\frac{-b}{q-1}} D_T^{\frac{-1}{q-1}} |\partial_{tt} D_T|^{\frac{q}{q-1}} dxdt = \left\{
\begin{array}{lll}
O\left(T^{2-\frac{b+(q+1)\theta}{q-1}} \ln T\right) &\mbox{if }\; b<2(q-1),\\
O\left(T^{-\frac{(q+1)\theta}{q-1}}(\ln T)^{2}\right) &\mbox{if }\; b=2(q-1),\\
O\left(T^{-\frac{(q+1)\theta}{q-1}}\right) &\mbox{if }\; b> 2(q-1),
\end{array}
\right.
\end{equation}
as $T\to +\infty$.

\medskip
On the other hand,
\begin{equation}\label{ESST3}
\int_{Q} |u| |\Delta D_T| dxdt\leq \left(\int_{Q} |x|^b |u|^q D_T dxdt\right)^{\frac{1}{q}} \left(\int_{Q} |x|^{\frac{-b}{q-1}} D_T^{\frac{-1}{q-1}}  |\Delta D_T|^{\frac{q}{q-1}} dxdt\right)^{\frac{q-1}{q}}.
\end{equation}
Applying Lemma \ref{LL18} with $\tau=b$ and $m=q$, we have
\begin{equation}\label{ESST4}
\left(\int_{Q} |x|^{\frac{-b}{q-1}} D_T^{\frac{-1}{q-1}}  |\Delta D_T|^{\frac{q}{q-1}} dxdt\right)^{\frac{q-1}{q}}=
O\left(T^\frac{\theta(q-1) -b -2}{q}(\ln T)^{\frac{q-1}{q}}\right),\quad \mbox{as }T\to +\infty.
\end{equation}

Combining \eqref{wkss11} with \eqref{ESST1}--\eqref{ESST4}, for $T$ large enough, there holds
\begin{equation}\label{CC1}
J_T(a,v)+ I_f T^\theta \leq  C [J_T(b,u)]^{\frac{1}{q}} \alpha(T),
\end{equation}
where
$$
J_T(a,v)=\int_{Q} |x|^a |v|^p D_T  dxdt, \quad J_T(b,u)=\int_{Q} |x|^b |u|^q D_T dxdt
$$
and
\begin{equation}\label{alphaT}
\alpha(T)= T^\frac{\theta(q-1) -b -2}{q}(\ln T)^{\frac{q-1}{q}}+
\left\{
\begin{array}{lll}
T^{\frac{-(q+1)\theta}{q}}(\ln T)^{\frac{2q-2}{q}} &\mbox{if }\; b\geq 2(q-1),\\
T^\frac{2(q-1) -b - (q+1)\theta}{q} (\ln T)^{\frac{q-1}{q}}&\mbox{if }\; b<2(q-1).
\end{array}
\right.
\end{equation}

Exchanging now the roles of $u$ and $v$, using \eqref{wkss12}, we have also
\begin{equation}\label{CC2}
J_T(b,u)+ I_g T^\theta \leq  C [J_T(a,v)]^{\frac{1}{p}} \beta(T),
\end{equation}
where
\begin{equation}\label{betaT}
\beta(T)= T^\frac{\theta(p-1) -a -2}{p}(\ln T)^{\frac{p-1}{p}}+
\left\{
\begin{array}{lll}
T^{\frac{-(p+1)\theta}{p}}(\ln T)^{\frac{2p-2}{p}} &\mbox{if }\; a\geq 2(p-1),\\
T^\frac{2(p-1) -a - (p+1)\theta}{p} (\ln T)^{\frac{p-1}{p}}&\mbox{if }\; a<2(p-1).
\end{array}
\right.
\end{equation}

Without loss of generality, we assume $I_f > 0$, as $(I_f, I_g) \succ (0, 0)$. Combining \eqref{CC1} and \eqref{CC2}, there holds, for large $T$,
$$
J_T(a,v)+ T^\theta \leq  CJ_T(a,v)^{\frac{1}{pq}} \beta(T)^{\frac{1}{q}} \alpha(T).
$$
Using Young's inequality, we get
\begin{equation}\label{FEQ}
T^{-\theta} \alpha(T)^{\frac{pq}{pq-1}}\beta(T)^{\frac{p}{pq-1}} \geq C > 0, \quad \mbox{for large $T$}.
\end{equation}
However, we claim that with large $\theta>0$,
\begin{equation}\label{claimL}
\lim_{T\to +\infty} T^{-\theta} \alpha(T)^{\frac{pq}{pq-1}} \beta(T)^{\frac{p}{pq-1}}=0.
\end{equation}

\medskip
By \eqref{alphaT} and \eqref{betaT}, for $\theta > 0$ large enough, there hold
\begin{align}
\label{AB}
\alpha(T) \sim T^\frac{\theta(q-1) -b -2}{q}(\ln T)^{\frac{q-1}{q}}, \;\; \beta(T) \sim T^\frac{\theta(p-1) -a -2}{p}(\ln T)^{\frac{p-1}{p}}, \quad \mbox{as } \; T \to +\infty.
\end{align}
Therefore
\begin{align}
\label{TAB}
T^{-\theta} \alpha(T)^{\frac{pq}{pq-1}}\beta(T)^{\frac{p}{pq-1}} \sim T^{-\frac{(b+2)p+(a+2)}{pq-1}}\ln T, \quad \mbox{as } \; T \to +\infty,
\end{align}
hence \eqref{claimL} holds true (with large but fixed $\theta$) since $(a, b) \succ (-2, -2)$. Obviously, \eqref{claimL} is not compatible with \eqref{FEQ}, which means that no global weak solution exists. This proves part (i) of Theorem \ref{T1} for $N = 2$.

\begin{rem}
\label{rnew1}
For general smooth open sets $\O$, we have no longer $\p_\nu H_\O \equiv$ constant on $\p\O$, hence we have no longer the equality \eqref{intpO} for all $f \in L^1(\p\O)$. However, by Hopf's Lemma, $\p_\nu H_\O \leq -C_\O < 0$ on $\p\O$. If now $f \geq 0$ and $T > {\rm dist}(0, \p\O)$, there holds
\begin{align*}
-\int_{\Gamma}\frac{\partial D_T}{\partial \nu} f d\sigma dt& \geq
C_\O \int_0^\infty \vartheta\left(\frac{s}{T^\theta}\right)^k ds\times \int_{\partial\O} f(x) d\sigma \geq CI_fT^\theta.
\end{align*}
where $C$ depends only on $\O$ and $\vartheta$. It's easy to see that all the arguments are still valid for $f, g \geq 0$.
\end{rem}

\subsection{Proof of part (ii)}

Assume that $(u,v)\in L^q_{loc}(Q)\times L^p_{loc}(Q)$ is a global weak solution to \eqref{P}--\eqref{BC2}. Let
$$
K_T(a,v)=\int_{Q} |x|^a |v|^p N_T dxdt,\quad K_T(b,u)=\int_{Q} |x|^b |u|^q N_T dxdt.
$$
By H\"older's inequality,
\begin{align}
\label{ESSTN1}
\begin{split}
\int_{Q} |u| |\Box N_T| dxdt \leq C K_T(b, u)^{\frac{1}{q}} \left(\int_{Q} |x|^{\frac{-b}{q-1}} N_T^{\frac{-1}{q-1}}|\Box N_T|^{\frac{q}{q-1}} dxdt\right)^{\frac{q-1}{q}}.
\end{split}
\end{align}
Applying Lemmas \ref{LL13}--\ref{LL20} with $\tau=b$, $m=q$ and $N=2$, remarking that the involved estimates are exactly of the same order or better than those in Lemmas \ref{LL11} and \ref{LL18}, we deduce that for $T$ large,
\begin{align}
\label{ESSTN2}
\left(\int_{Q} |x|^{\frac{-b}{q-1}} N_T^{\frac{-1}{q-1}} |\Box N_T|^{\frac{q}{q-1}} dxdt\right)^{\frac{q-1}{q}} \leq C\alpha(T)
\end{align}
where $\alpha(T)$ is given by \eqref{alphaT}. Similarly, there holds, for $T$ large,
\begin{align}\label{ESSTN5}
\begin{split}
\int_{Q} |v| |\Box N_T| dxdt &\leq C K_T(a, v)^\frac{1}{p} \left(\int_{Q} |x|^{\frac{-a}{p-1}} N_T^{\frac{-1}{p-1}}  |\Box N_T|^{\frac{p}{p-1}} dxdt\right)^{\frac{p-1}{p}}\\
 & \leq C K_T(a, v)^\frac{1}{p}\beta(T),
\end{split}
\end{align}
where $\beta(T)$ is given by \eqref{betaT}. Moreover, by the definition of $N_T$, for $T$ large,
\begin{align}
\label{intpN}
\int_{\Gamma} fN_T d\sigma dt = \int_{\Gamma} f \vartheta_T(t) d\sigma dt = CI_f T^\theta, \quad \int_{\Gamma} gN_T d\sigma dt = CI_g T^\theta.
\end{align}
Take $\psi=N_T$ in \eqref{wks21}--\eqref{wks22},  combining with \eqref{ESSTN1}--\eqref{ESSTN5}, we get
\begin{equation}\label{CCN2}
K_T(a,v)+ I_f T^\theta \leq C K_T(b,u)^\frac{1}{q}\alpha(T), \quad K_T(b,u)+ I_g T^\theta \leq C K_T(a,v)^{\frac{1}{p}} \beta(T).
\end{equation}

Remark that \eqref{CCN2} is just \eqref{CC1} and \eqref{CC2}, if we replace $K_T$ by $J_T$. Assuming without loss of generality $I_f > 0$, repeating the previous arguments for part ${\rm (i)}$, \eqref{FEQ} still holds true. However, we can always choose $\theta>0$ large to get \eqref{claimL}, which makes \eqref{FEQ} impossible. We reach a contradiction.

\begin{rem}
\label{rnew2}
To get \eqref{intpN}, we used only $\xi\left(\frac{x}{T}\right) \equiv 1$ on $\p\O$ when $T$ is large, which is true for general bounded open sets $\O$.
\end{rem}

\subsection{Proof of part (iii)}
We use again the method of contradiction. Assume that $(u,v)\in L^q_{loc}(Q)\times L^p_{loc}(Q)$ is a global weak solution to \eqref{P}--\eqref{BC3}, with now $p>2$. We take $(D_T, N_T)$ as a couple of test functions, and use the same notations $J_T$, $K_T$, $\alpha(T)$ and $\beta(T)$ as before.

\medskip
Inserting $\varphi=D_T$ in \eqref{wks21}, we obtain, for $T$ large,
\begin{equation}\label{CCN3}
J_T(a,v)+ I_f T^\theta \leq C[J_T(b,u)]^\frac{1}{q}\alpha(T) \leq C[K_T(b,u)\ln T]^\frac{1}{q}\alpha(T).
\end{equation}
The key point here is to estimate $\|v\Box N_T\|_{L^1(Q)}$ using $J_T(a, v)$. By H\"older's inequality,
\begin{equation}\label{ESSTND5}
\begin{split}
\int_{Q} |v| |\Box N_T| dxdt & \leq J_T(a,v)^{\frac{1}{p}} \left(\int_{Q} |x|^{\frac{-a}{p-1}} D_T^{\frac{-1}{p-1}} |\Box N_T|^{\frac{p}{p-1}}\,dx\,dt\right)^{\frac{p-1}{p}}\\
& \leq C J_T(a,v)^{\frac{1}{p}}\beta(T).
\end{split}
\end{equation}
The last inequality follows from Lemmas \ref{LL16}--\ref{LL23} with $\tau = a$, $m = p$ and $N = 2$. Moreover, let $\psi = N_T$ in \eqref{wks22}, using \eqref{ESSTND5}, there holds
\begin{equation}\label{AF2}
K_T(b,u)+I_g T^\theta \leq C \int_{Q} |v||\Box N_T| dxdt \leq C J_T(a,v)^{\frac{1}{p}}\beta(T).
\end{equation}

$\bullet$ Assume first $I_f > 0$, combining \eqref{CCN3} and \eqref{AF2}, we deduce that
$$
J_T(a,v)+ T^\theta \leq CJ_T(a,v)^\frac{1}{pq}\alpha(T)[\beta(T)\ln T]^\frac{1}{q}.
$$
Applying Young's inequality, there holds
\begin{equation*}
T^{-\theta} \alpha(T)^{\frac{pq}{pq-1}} [\beta(T)\ln T]^{\frac{p}{pq-1}} \geq C > 0, \quad \mbox{for large $T$}.
\end{equation*}
However, fix $\theta > 0$ large, we have still \eqref{TAB}, which is impossible seeing the above estimate.

\smallskip
$\bullet$ Assume now $I_g > 0$. Always using \eqref{CCN3} and \eqref{AF2}, there holds
$$
K_T(b, u)+ T^\theta \leq CK_T(b,u)^\frac{1}{pq}\beta(T)[\alpha(T)(\ln T)^{\frac{1}{q}}]^\frac{1}{p},
$$
hence
\begin{equation*}
T^{-\theta} [\alpha(T)\ln T]^{\frac{q}{pq-1}}\beta(T)^{\frac{pq}{pq-1}} \geq C > 0, \quad \mbox{for large $T$}.
\end{equation*}
Moreover, fixing a large $\theta$ such that \eqref{AB} is valid, we get, as $T\to +\infty$,
$$T^{-\theta} [\alpha(T)\ln T]^{\frac{q}{pq-1}}\beta(T)^{\frac{pq}{pq-1}} \sim T^{-\frac{(b+2)+(a+2)q}{pq-1}}(\ln T)^{1+\frac{q}{pq-1}}.$$
This contradicts the previous inequality.

\medskip
To conclude, if $(I_f, I_g) \succ (0, 0)$ and $(a, b) \succ (-2, -2)$, there exists always a contradiction if a global weak solution exists. The proof of part (iii) is completed for $N = 2$. \qed

\section{Proof of Theorem \ref{T1} for $N \geq 3$}\label{sec4}

Let $N \geq 3$, $p,q>1$ and $k$ satisfy \eqref{ask}. As above, we can consider just $\Omega = B$. The proof is very similar to the case $N = 2$.

\subsection{Proof of parts (i)--(ii)} Without restriction of the generality, suppose $I_f>0$ and
\begin{equation}\label{aspqN}
\delta + 2 = \frac{2p(q+1)+pb+a}{pq-1}>N,
\end{equation}
where $\delta$ is defined by \eqref{dg}. Assume that $(u,v)\in L^q_{loc}(Q)\times L^p_{loc}(Q)$ is a global weak solution to \eqref{P}--\eqref{BC1}. Proceeding as above, by Lemmas \ref{LL12} and \ref{LL19} with $\tau=b$ and $m=q$, H\"older and Young's inequalities,  we obtain again \eqref{FEQ} with now
\begin{equation}\label{alphaTN3}
\alpha(T)=T^{\frac{(N-2+\theta)(q-1)-b-2}{q}}+
\left\{
\begin{array}{lll}
T^{\frac{-(q+1)\theta}{q}}(\ln T)^{\frac{q-1}{q}} &\mbox{if}& b\geq N(q-1),\\
T^{\frac{N(q-1)-b-(q+1)\theta}{q}} &\mbox{if}& b<N(q-1)
\end{array}
\right.
\end{equation}
and
\begin{equation}\label{betaTN3}
\beta(T)=T^{\frac{(N-2+\theta)(p-1)-a-2}{p}}+
\left\{
\begin{array}{lll}
T^{\frac{-(p+1)\theta}{p}}(\ln T)^{\frac{p-1}{p}} &\mbox{if}& a\geq N(p-1),\\
T^{\frac{N(p-1)-a-(p+1)\theta}{p}} &\mbox{if}& a<N(p-1).
\end{array}
\right.
\end{equation}

Taking $\theta$ large enough, when $T\to +\infty$, there holds
\begin{align}
\label{ABsim}
\alpha(T)\sim T^{\frac{(N-2+\theta)(q-1)-b-2}{q}} \quad \mbox{and} \quad \beta(T)\sim T^{\frac{(N-2+\theta)(p-1)-a-2}{p}}.
\end{align}
Hence
$$
T^{-\theta} \alpha(T)^{\frac{pq}{pq-1}}\beta(T)^{\frac{p}{pq-1}}\sim
T^{N - 2 -\frac{(b+2)p + (a+2)}{pq-1}} = T^{N - 2 - \delta}.
$$
Thanks to \eqref{aspqN}, \eqref{claimL} follows by choosing a large $\theta$.

\medskip
The contradiction between \eqref{FEQ} and \eqref{claimL}  means that no global weak solution exists for \eqref{P}--\eqref{BC1}. The nonexistence result for \eqref{P}--\eqref{BC2} can be derived by similar arguments, so we omit the proof.

\subsection{Proof of part (iii)}
Let $p>2$ and suppose that $(u,v)\in L^q_{loc}(Q)\times L^p_{loc}(Q)$ is a global weak solution to \eqref{P}--\eqref{BC3}. For $T>1$, using $\varphi=D_T$ in \eqref{wks21}, we can claim that
\begin{equation}\label{CCN3couf}
J_T(a,v)+ I_f T^\theta \leq C[J_T(b,u)]^\frac{1}{q}\alpha(T) \leq C[K_T(b,u)]^\frac{1}{q}\alpha(T).
\end{equation}
Here we used $H(x) \leq 1$ as $N \geq 3$.

\medskip
Proceeding as in the proof of part (iii) for $N = 2$, taking $\psi = N_T$ in \eqref{wks22}, we get \eqref{AF2}. Here $\alpha(T)$ and $\beta(T)$ are given by {\eqref{alphaTN3}
and \eqref{betaTN3}. Assume first $I_f > 0$ and \eqref{aspqN} holds. Using \eqref{CCN3couf} and \eqref{AF2}, we have still \eqref{FEQ}, but also the claim \eqref{claimL} for $\theta$ large enough, which is impossible.

\medskip
Assume now $I_g > 0$ and
$$
\gamma + 2 = \frac{2q(p+1)+qa+b}{pq-1}>N,
$$
with $\gamma$ given by \eqref{dg}. Combining \eqref{CCN3couf} and \eqref{AF2}, there holds
$$
K_T(b, u)+ T^\theta \leq CK_T(b,u)^\frac{1}{pq}\beta(T)\alpha(T)^\frac{1}{p},
$$
hence
$$
T^{-\theta} \alpha(T)^{\frac{q}{pq-1}}\beta(T)^{\frac{pq}{pq-1}} \geq C > 0, \quad \mbox{as }\; T\to \infty.
$$
We can conclude if
\begin{align}
\label{new4.6}
\lim_{T\to +\infty} T^{-\theta} \alpha(T)^{\frac{q}{pq-1}}\beta(T)^{\frac{pq}{pq-1}}=0.
\end{align}
Taking $\theta$ large enough, by \eqref{ABsim}, there holds $T^{-\theta}\alpha(T)^{\frac{q}{pq-1}}\beta(T)^{\frac{pq}{pq-1}} \sim T^{N -2 - \gamma}$ for $T$ large, so \eqref{new4.6} holds true and the proof of part (iii) is completed. \qed

\section{Further Remarks}\label{sec5}
It's worthy to mention that the system of wave equations in the whole space, i.e.
$$\Box u = |v|^p, \;\; \Box v = |u|^q \quad \mbox{in }\; (0,\infty)\times \R^N, \quad p, q > 1, \; N \geq 2$$
has been extensively studied since the seminal work \cite{DGM}. It is showed that for compactly supported initial data with positive averages for $\p_t u(0, x)$, $\p_t v(0, x)$, there exists a critical curve for the global existence, which is
$$\max\left\{\frac{p+2+q^{-1}}{pq-1}, \frac{q+2+p^{-1}}{pq-1}\right\} = \frac{N-1}{2}.$$
The corresponding  system of inequalities was studied in \cite{PV}, where Theorem 6 (see also Application 2) proves the nonexistence of nontrivial global solution if
$$1 < p, q < \frac{N+1}{N-1}, \;\; \int_{\R^N} \p_t u(0, x)dx \geq 0， \;\; \int_{\R^N} \p_t v(0, x)dx \geq 0.$$
We can see that the critical criteria in the above cases are quite different for our situation. This phenomenon is similar to comparing Strauss's critical exponent $p_c(N)$ for \eqref{PRN}, Kato's exponent $\widetilde p_c(N)$ for \eqref{PERN} and Zhang's exponent $p^*$ for \eqref{PRNE}. In other words, the blow-up for inequalities on exterior domains is of very different nature comparing to the whole space situation.

\medskip
The critical case $N \geq 3$,
$$
\max\left\{{\rm sgn}(I_f)\times\frac{2p(q+1)+pb+a}{pq-1},\;\; {\rm sgn}(I_g)\times \frac{2q(p+1)+qa+b}{pq-1}\right\}= N
$$
for the system \eqref{P} is not investigated here. It should be interesting to decide whether this critical curve in $(p, q)$--plan belongs to the blow-up situation.

\medskip
For the mixed boundary condition case \eqref{BC3}, we supposed that $p>2$ due to technical reason. It should be interesting to consider the case $1<p\leq 2$.

\medskip
As indicated in Remark \ref{signfg}, the case of wave inequalities, under homogeneous constraints, i.e. $f = g = 0$, is very special. We may have no critical criteria of Fujita type in general. However, the simple example there only works for $a, b \leq 0$. It could be interesting to understand the long term behavior of solutions to \eqref{P} with $a, b > 0$ and various type of homogeneous constraints with $f=g=0$. 

\medskip
In the case of homogeneous constraints, another way to avoid the simple example in Remark \ref{signfg} is to add sign condition or nonnegative average constraint on $\p_t u(0, x)$, $\p_t v(0, x)$ as in \cite{K, PV}. For example, consider the following problem:
$$
\Box u \geq |x|^a |u|^p \; \mbox{ in }\; \R_+\times B_r^c, \quad u \geq 0\; \mbox{ on }\;  \R_+\times \p B_r\quad \mbox{and}\quad \partial_t u(0, x) \geq 0,
$$
where $B_r \subset \R^N$, $N \geq 3$, $a > -2$. Laptev \cite{L} showed that the critical exponent for existence of non trivial global solution is $\frac{N+1 + a}{N - 1}$.

\medskip
The understanding for wave equation on exterior domains with homogeneous Dirichlet boundary condition is more difficult. Consider \eqref{PRNE} with $N \geq 2$, $a = 0$ and $u = 0$ on $\p\O$. There are many works who suggest that the critical exponent of Fujita type could be the same as for the whole space, i.e. $p_c(N)$ given by \eqref{PCN}.
\begin{itemize}
\item Let $1 < p \leq p_c(N)$, it's showed that for special choice of $(u_0, u_1) \succ (0, 0)$, the solution to \eqref{PRNE} with $(u, \p_t u)|_{t=0} = (\e u_0, \e u_1)$ will blow up for any $\e > 0$, see \cite{LZ} and the references therein. However, the blow-up result for general $(u_0, u_1)$ seems unknown.
\item For $p > p_c(N)$, there exist some global existence results for some $p > p_c(N)$ in low dimensions $N \leq 4$ with non trapping obstacle $\O$ and suitable $u_0, u_1 > 0$. See for instance \cite{D, SSW}.
\end{itemize}
As far as we know, it seems that there is no general result for the global existence of wave equation on exterior domains \eqref{PRNE} with homogeneous Neumann boundary condition.

\bigskip\noindent
{\bf Acknowledgments}. M.J.~ and B.S.~ extend their appreciation to the Deanship of Scientific Research at King Saud University for funding this work through research group No. RGP-237. D.Y.~is partially supported by Science and Technology Commission of Shanghai Municipality (STCSM), grant No. 18dz2271000.

\bigskip

\end{document}